\theoremstyle{definition}
\theoremstyle{theorem}
\newtheorem{theoremalpha}{Theorem}
\theoremstyle{corollary}
\newtheorem{theorem}{Theorem}[section]
\newtheorem{main theorem}{Main Theorem}
\newtheorem{proposition}[theorem]{Proposition}
\newtheorem{corollary}[theorem]{Corollary}
\newtheorem{lemma}[theorem]{Lemma}
\newtheorem{theorem*}{Theorem}
\newtheorem{corollary*}[theorem*]{Corollary}
\newtheorem{conjecture*}[theorem*]{Conjecture}
\theoremstyle{definition}
\newtheorem{example}[theorem]{Example}
\newtheorem{definition}[theorem]{Definition}
\newtheorem{definition-lemma}[theorem]{Definition-Lemma}
\newtheorem{remark}[theorem]{Remark}
\newcommand\R{\mathbb{R}}
\newcommand\Q{\mathbb{Q}}
\newcommand\Z{\mathbb{Z}}
\newcommand\C{\mathbb{C}}
\renewcommand\P{\mathbb{P}}
\newcommand\eps{\varepsilon}
\renewcommand\epsilon{\varepsilon}
\newcommand{\mc}{\mathcal}
\newcommand{\ol}[1]{\overline{#1}}
\DeclareMathOperator{\N^1}{N^1}
\DeclareMathOperator{\ord}{ord}
\DeclareMathOperator{\mult}{mult}
\DeclareMathOperator{\Supp}{Supp}
\DeclareMathOperator{\vol}{vol}
\DeclareMathOperator{\val}{val}
\DeclareMathOperator{\divisor}{div}
\DeclareMathOperator{\Eff}{Eff}
\DeclareMathOperator{\Bigdiv}{Big}
\newcommand{\bm}{\mathbf B_-}  
\newcommand{\B}{\mathbf B}
\newcommand{\bp}{\mathbf B_+}  
\newcommand{\okbd}{\Delta}
\newcommand{\okval}{\Delta^{\val}}
\newcommand{\oklim}{\Delta^{\lim}}
\newcommand{\kappanu}{\kappa_\nu}
\begin{document}

\title{Okounkov bodies associated to pseudoeffective divisors}

\author{Sung Rak Choi}
\address{Department of Mathematics, Yonsei University, Seoul, Korea}
\email{sungrakc@yonsei.ac.kr}

\author{Yoonsuk Hyun}
\address{Samsung Advanced Institute of Technology, Suwon, Korea}
\email{yoonsuk.hyun@samsung.com}

\author{Jinhyung Park}
\address{School of Mathematics, Korea Institute for Advanced Study, Seoul, Korea}
\email{parkjh13@kias.re.kr}

\author{Joonyeong Won}
\address{Center for Geometry and Physics, Institute for Basic Science, Pohang, Korea}
\email{leonwon@ibs.re.kr}

\subjclass[2010]{14C20, 52A20}
\date{\today}
\keywords{Okounkov body, pseudoeffective divisor, Kodaira dimension, restricted volume, Nakayama subvariety, positive volume subvariety}
\thanks{S. Choi and J. Park were partially supported by the NRF grant (NRF-2016R1C1B2011446). J. Won was partially supported by IBS-R003-D1, Institute for Basic Science in Korea. }

\begin{abstract}
An Okounkov body is a convex subset in Euclidean space associated to a big divisor on a smooth projective variety with respect to an admissible flag. In this paper, we introduce two convex bodies associated to pseudoeffective divisors, called the valuative Okounkov bodies and the limiting Okounkov bodies, and show that these convex bodies reflect the asymptotic properties of pseudoeffective divisors as in the case with big divisors. Our results extend  the works of Lazarsfeld-Musta\c{t}\u{a} and Kaveh-Khovanskii. For this purpose, we define and study special subvarieties, called the Nakayama subvarieties and the positive volume subvarieties, associated to pseudoeffective divisors.
\end{abstract}

\maketitle


\section{Introduction}
For a divisor $D$ on a smooth projective variety $X$ of dimension $n$, one can associate a convex body $\okbd_{Y_\bullet}(D)$, called the Okounkov body of $D$, in the Euclidean space $\R^n$ with respect to an admissible flag $Y_\bullet$.
After the pioneering works by Lazarsfeld-Musta\c{t}\u{a} (\cite{lm-nobody}) and Kaveh-Khovanskii (\cite{KK}), motivated by earlier works by Okounkov (\cite{O1}, \cite{O2}),
the Okounkov bodies $\okbd_{Y_\bullet}(D)$ have received a considerable amount of attention in a variety of flavors. It is believed that this convex body carries rich information of the asymptotic invariants of $D$.
For example, it was proven in \cite[Theorem A]{lm-nobody} that if $D$ is big, then the Euclidean volume of $\okbd_{Y_\bullet}(D)$ in $\R^n$ is equal to the volume $\vol_X(D)$ of a divisor $D$ up to the constant $n!$, that is, we have
$$
\vol_{\R^n}(\okbd_{Y_\bullet}(D)) = \frac{1}{n!}\vol_X(D).
$$
However, little is known about the Okounkov bodies for non-big pseudoeffective divisors.
One of the annoying phenomena is that for a pseudoeffective divisor $D$ that is not big, the associated convex body $\okbd_{Y_\bullet}(D)$ is not full dimensional in $\R^n$ so that its Euclidean volume in $\R^n$ is zero.
Nevertheless, it is still tempting to study the asymptotic properties of pseudoeffective divisors using the associated Okounkov bodies.

In this paper, we introduce and study two different convex bodies, the valuative Okounkov body $\okval_{Y_\bullet}(D)$ and the limiting Okounkov body $\oklim_{Y_\bullet}(D)$, associated to a pseudoeffective divisor $D$ with respect to an admissible flag $Y_\bullet$ (see Definition \ref{def-1.1}).
Using these convex bodies, we extend some of the previous works of \cite{lm-nobody} and \cite{KK} on big divisors to the pseudoeffective divisors.
We will readily see that $\okval_{Y_\bullet}(D)$ and $\oklim_{Y_\bullet}(D)$ with respect to a \emph{suitable choice of an admissible flag $Y_\bullet$} encode the asymptotic invariants of the divisor $D$ as in the case of big divisors.

\smallskip

Turning to the details, we recall the construction of the Okounkov body that is equivalent to the ones given by Lazarsfeld-Musta\c{t}\u{a} (\cite{lm-nobody}) and Kaveh-Khovanskii (\cite{KK}).
Let $D$ be an $\R$-divisor on a projective variety $X$ of dimension $n$.
We fix an \emph{admissible flag} $Y_\bullet$ on $X$, which is, by definition, a sequence of subvarieties
$$
Y_{\bullet}:~ X=Y_0\supseteq Y_1 \supseteq\cdots\supseteq Y_{n-1}\supseteq Y_n=\{ x \}
$$
such that each $Y_i$ is an $(n-i)$-dimensional irreducible subvariety of $X$ that is nonsingular at a point $x$.
After possibly replacing $X$ by an open subset, we may suppose that each $Y_{i+1}$ is a Cartier divisor on $Y_i$.
Let $|D|_{\R}:=\{ D' \mid D \sim_{\R} D' \geq 0 \}$ and suppose that it is nonempty.
We define a valuation-like function
$$
\nu_{Y_\bullet} : |D|_{\R} \to \R^n_{\geq 0}
$$
as follows. For $D' \in |D_{\R}|$, let
$$
\nu_1=\nu_1(D'):=\ord_{Y_1}(D') ~~\text{ and }~~  \nu_2=\nu_2(D'):=\ord_{Y_2}((D'-\nu_1(D')Y_1)|_{Y_1}).
$$
Assuming we have defined $\nu_1,\nu_2,\cdots, \nu_{i-1}$, we inductively define $\nu_i$ as
$$
\nu_i= \nu_i(D') :=\ord_{Y_i}( ( \cdots ((D'-\nu_1 Y_1)|_{Y_1}-\nu_2 Y_2)|_{Y_2} - \cdots - \nu_{i-1}Y_{i-1})|_{Y_{i-1}}).
$$
By collecting the values $\nu_i = \nu_i(D')$, we obtain
$$
\nu_{Y_\bullet}(D') = (\nu_1, \cdots, \nu_n) \in \R^n_{\geq 0}.
$$
Following \cite{lm-nobody} and \cite{KK}, we define the \emph{Okounkov body} $\okbd_{Y_\bullet}(D)$ of a divisor $D$ with respect to an admissible flag $Y_\bullet$ as the closed set
$$
\okbd_{Y_\bullet}(D) := \text{the closure of the convex hull of $\nu_{Y_\bullet}(|D|_{\R})$ in $\R^n_{\geq 0}$}.
$$
In \cite{lm-nobody}, Lazarsfeld-Musta\c{t}\u{a} mainly consider the Okounkov body $\okbd_{Y_\bullet}(D)$ of a big divisor $D$.
However, the valuation-like function $\nu_{Y_\bullet}$ works for any divisors $D$ with nonempty $|D|_{\mathbb R}$, and the construction of the Okounkov body can be carried out for such divisors without any changes. See Subsection \ref{ok of LM} for more details.

Now we define the valuative Okounkov body and the limiting Okounkov body of a divisor.

\begin{definition}\label{def-1.1}
Let $D$ be an $\R$-divisor on a smooth projective variety $X$ of dimension $n$, and $Y_\bullet$ be an admissible flag on $X$.
\begin{enumerate}[leftmargin=0.3cm,itemindent=0.5cm]
 \item[$(1)$] Suppose that $D$ is effective up to $\R$-linear equivalence, that is, $|D|_{\R}\neq \emptyset$.
 The \emph{valuative Okounkov body} $\okval_{Y_\bullet}(D)$ of $D$ with respect to $Y_\bullet$ is defined as
$$
\okval_{Y_\bullet}(D) := \text{the closure of the convex hull of $\nu_{Y_\bullet}(|D|_{\R})$ in $\R^n_{\geq 0}$}.
$$
If $|D|_{\R} = \emptyset$, then we put $\okval_{Y_\bullet}(D) := \emptyset$.\footnote{
By definition, we have in fact $\okbd_{Y_\bullet}(D)=\okval_{Y_\bullet}(D)$ for any divisor $D$.
However, in this paper, we use the notation $\okbd_{Y_\bullet}(D)$ only when $D$ is a big divisor in order to stress the properties of $\okval_{Y_\bullet}(D)$ and $\oklim_{Y_\bullet}(D)$ for a non-big divisor $D$.
}
 \item[$(2)$] Suppose that $D$ is pseudoeffective. The \emph{limiting Okounkov body} $\oklim_{Y_\bullet}(D)$ of $D$ with respect to $Y_\bullet$ is defined as
$$
\oklim_{Y_\bullet}(D):=\lim_{\epsilon \to 0+} \okbd_{Y_\bullet}(D+\epsilon A) = \bigcap_{\epsilon >0} \okbd_{Y_\bullet}(D+\epsilon A) \text{ in $\R^n_{\geq 0}$}
$$
where $A$ is a fixed ample divisor on $X$.
If $D$ is not pseudoeffective, then we put $\oklim_{Y_\bullet}(D):=\emptyset$.
Note that the definition of the limiting Okounkov body $\oklim_{Y_\bullet}(D)$ is independent of the choice of the ample divisor $A$.
\end{enumerate}
\end{definition}

\smallskip

We briefly recall definitions of some of basic asymptotic invariants of divisors.
Let $D$ be an $\R$-divisor on a smooth projective variety $X$.
The \emph{restricted volume} of $D$ along a $v$-dimensional subvariety $V$ of $X$ is defined as
$$
\vol_{X|V}(D):=\limsup_{m \to \infty} \frac{h^0(X|V, \lfloor mD \rfloor)}{m^v/v!}
$$
where $h^0(X|V, \lfloor mD \rfloor)$ is the dimension of the image of the restriction map $H^0(X, \mathcal{O}_X(\lfloor mD \rfloor)) \to H^0(V, \mathcal{O}_V(\lfloor mD \rfloor))$. Note that $\vol_{X|X}(D)=:\vol_X(D)$ is the usual volume of the divisor $D$.
When $V \not\subseteq \bm(D)$, the \emph{augmented restricted volume} of $D$ along $V$ is defined as
$$
\vol_{X|V}^+(D):=\lim_{\epsilon \to 0+}\vol_{X|V}(D+\epsilon A)
$$
where $A$ is some fixed ample divisor on $X$. It is independent of the choice of the ample divisor $A$.
For more details, we refer to Subsection \ref{volsubsec}.
The \emph{Iitaka dimension} of $D$ is defined as
$$
\kappa(D):=\max \left\{ k \in \Z_{\geq 0} \left|\; \limsup_{m \to \infty} \frac{h^0(X, \mathcal{O}_X(\lfloor mD \rfloor))}{m^k}>0 \right.\right\}
$$
if  $h^0(X, \mathcal{O}_X(\lfloor mD \rfloor)) \neq 0$ for some $m > 0$, and $\kappa(D):=-\infty$ otherwise. Similarly, the \emph{numerical Iitaka dimension} of $D$ is defined as
$$
\kappanu(D):=\max \left\{ k \in \Z_{\geq 0} \left|\; \limsup_{m \to \infty} \frac{h^0(X, \mathcal{O}_X(\lfloor mD \rfloor + A))}{m^k}>0 \right.\right\}
$$
for some fixed ample Cartier divisor $A$ if $h^0(X, \mathcal{O}_X(\lfloor mD \rfloor+A)) \neq 0$ for some $m > 0$, and $\kappanu(D):=-\infty$ otherwise.  It is also independent of the choice of the ample divisor $A$. For more details, see Subsections \ref{subsec-Iitaka} and \ref{subsec-posvolloc}.

\smallskip

To extract asymptotic invariants of a divisor $D$ with $\kappa(D)\geq0$ from the valuative Okounkov body $\okval_{Y_\bullet}(D)$, we consider an admissible flag $Y_\bullet$ that contains a special subvariety $Y_{n-\kappa(D)}=U \subseteq X$ such that the natural restriction map $H^0(X, \mathcal{O}_X( \lfloor mD \rfloor )) \to H^0(U, \mathcal{O}_U(\lfloor mD|_U \rfloor))$ is injective for every integer $m \geq 0$. We call such $U$ a \emph{Nakayama subvariety} of $D$ (see Subsection \ref{subsec-Iitaka}).
Note that any general subvariety of dimension $\kappa(D)$ is a Nakayama subvariety of $D$ (Proposition \ref{prop-gen subvar=naka}).
If $D$ is big, then $X$ itself is the unique Nakayama subvariety of $D$.

\begin{theoremalpha}[=Theorem \ref{okval-main}]\label{thrm-val}
Let $D$ be an $\R$-divisor on a smooth projective variety $X$ of dimension $n$ such that $\kappa(D)\geq 0$.
Fix an admissible flag $Y_\bullet$ containing a Nakayama subvariety $U$ of $D$ such that $Y_n=\{x\}$ is a general point. Then $\okval_{Y_\bullet}(D) \subseteq \{0 \}^{n-\kappa(D)} \times \R^{\kappa(D)}_{\geq 0}$.
Furthermore, we have
$$
\dim \okval_{Y_\bullet} (D)=\kappa(D) \text{\;\; and\;\; } \vol_{\R^{\kappa(D)}}(\okval_{Y_\bullet}(D))=\frac{1}{\kappa(D)!} \vol_{X|U} (D)
$$
where $\okval_{Y_\bullet}(D)$ is regarded as a convex subset of $\R^{\kappa(D)}$.
\end{theoremalpha}

For convenience, we define $\dim (\text{point}) := 0$ and $\vol_{\R^0}(\text{point}):=1$ throughout the paper. Theorem \ref{thrm-val} does not hold if $D$ is only effective up to $\R$-linear equivalence (see Remark \ref{effuptoR}).

\smallskip

To investigate the asymptotic properties of a pseudoeffective divisor $D$ using the limiting Okounkov body $\oklim_{Y_\bullet}(D)$, we consider an admissible flag $Y_\bullet$ that contains a special subvariety $Y_{n-\kappanu(D)}=V \subseteq X$ such that  $\vol_{X|V}^+(D)>0$.
We call such $V$ a \emph{positive volume subvariety} of $D$ (see Subsection \ref{subsec-posvolloc}).
Note that that the intersection of $(n-\kappanu(D))$ general ample divisors is a positive volume subvariety of $D$ (Proposition \ref{general pos vol loc}).
Thus we may take a general admissible flag $Y_\bullet$ constructed by successively intersecting general ample divisors.
If $D$ is big, then $X$ itself is the unique positive volume subvariety of $D$.

\begin{theoremalpha}[=Corollary \ref{cor-oklimY}]\label{thrm-lim}
Let $D$ be a pseudoeffective $\R$-divisor on a smooth projective variety $X$ of dimension $n$.
Fix an admissible flag $Y_\bullet$ containing a positive volume subvariety $V$ of $D$. Then $\oklim_{Y_\bullet}(D) \subseteq \{0 \}^{n-\kappa_{\nu}(D)} \times \R^{\kappa_{\nu}(D)}_{\geq 0}$.
Furthermore, we have
$$
\dim \oklim_{Y_\bullet} (D)=\kappa_{\nu}(D) \text{\;\; and\;\; } \vol_{\R^{\kappa_{\nu}(D)}}(\oklim_{Y_\bullet} (D))=\frac{1}{\kappa_{\nu}(D)!} \vol^+_{X|V}(D)
$$
where $\oklim_{Y_\bullet}(D)$ is regarded as a convex subset of $\R^{\kappa_{\nu}(D)}$.
\end{theoremalpha}

In general, we have $\okval_{Y_\bullet}(D) \subseteq \oklim_{Y_\bullet}(D)$ (see Remark \ref{remk-restricted ok}), and the inclusion can be strict (see Example \ref{okval notsubset oklim}).
Note that if $D$ is big, then we have
$$
\okval_{Y_\bullet}(D)=\oklim_{Y_\bullet}(D)=\okbd_{Y_\bullet}(D).
$$
We also remark that Di Biagio-Pacienza \cite{DP} also studied valuative Okounkov bodies of effective divisors and the subvarieties on which the restricted divisors behave like big divisors.

Recall that the Okounkov body of a big divisor is a numerical invariant. More precisely, by \cite[Proposition 4.1]{lm-nobody} and \cite[Theorem A]{Jow},  for big divisors $D,D'$ on $X$, we have $\okbd_{Y_\bullet}(D) = \okbd_{Y_\bullet}(D')$ for any admissible flag $Y_\bullet$ if and only if $D \equiv D'$. As Theorem \ref{thrm-jow} states below, this result can be extended to the limiting Okounkov bodies $\oklim_{Y_\bullet}(D)$ for pseudoeffective divisors $D$. We note however that the valuative Okounkov body $\okval_{Y_\bullet} (D)$ is not a numerical invariant (see Remark \ref{ex-okval-not num}), but an invariant of $\R$-linear equivalence (see Proposition \ref{okvallin}).

\begin{theoremalpha}[=Theorem \ref{Jow for oklim}]\label{thrm-jow}
Let $D,D'$ be pseudoeffective $\R$-divisors on a smooth projective variety $X$.
Then
$$
\text{$D \equiv D'$ if and only if $\oklim_{Y_\bullet}(D)=\oklim_{Y_\bullet}(D')$ for every admissible flag $Y_\bullet$.}
$$
\end{theoremalpha}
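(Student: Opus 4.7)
The forward direction is an immediate consequence of Jow's theorem for big $\R$-divisors. If $D \equiv D'$ and $A$ is the auxiliary ample divisor used in the definition of $\oklim$, then for every $\epsilon > 0$ the divisors $D + \epsilon A$ and $D' + \epsilon A$ are big and numerically equivalent, so Jow's theorem yields $\okbd_{Y_\bullet}(D + \epsilon A) = \okbd_{Y_\bullet}(D' + \epsilon A)$ for every admissible flag $Y_\bullet$; intersecting over $\epsilon > 0$ gives $\oklim_{Y_\bullet}(D) = \oklim_{Y_\bullet}(D')$. (This also silently uses that $\oklim_{Y_\bullet}$ is independent of the choice of auxiliary ample divisor, a point that should have been established earlier in the paper.)

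For the converse, the plan is to bootstrap back to Jow's theorem by perturbation. Fix an ample divisor $H$; for each $\epsilon > 0$, both $D_\epsilon := D + \epsilon H$ and $D'_\epsilon := D' + \epsilon H$ are big. If I can show $\okbd_{Y_\bullet}(D_\epsilon) = \okbd_{Y_\bullet}(D'_\epsilon)$ for every admissible flag, then Jow's theorem applied to these big divisors gives $D_\epsilon \equiv D'_\epsilon$ for every $\epsilon > 0$, and hence $D \equiv D'$. Using $H$ itself to compute the limit, $\okbd_{Y_\bullet}(D_\epsilon) = \oklim_{Y_\bullet}(D_\epsilon) = \bigcap_{\delta > 0} \okbd_{Y_\bullet}(D + (\epsilon + \delta)H)$, and similarly for $D'_\epsilon$, so the task reduces to establishing a translation-type identity such as $\oklim_{Y_\bullet}(D + tH) = \oklim_{Y_\bullet}(D) + \okbd_{Y_\bullet}(tH)$ for all $t > 0$, or more modestly a reconstruction lemma that recovers $\okbd_{Y_\bullet}(D + tH)$ from $\oklim_{Y_\bullet}(D)$ and $tH$.

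The expected main obstacle is closing this translation identity. Superadditivity of Okounkov bodies gives the inclusion $\oklim_{Y_\bullet}(D) + \okbd_{Y_\bullet}(tH) \subseteq \oklim_{Y_\bullet}(D + tH)$ for free, but the reverse requires extra input. I would attempt to close it via a volume comparison rooted in Theorem B: both convex bodies are $\kappa_\nu$-dimensional with volume equal to the augmented restricted volume of the relevant divisor on a common positive volume subvariety contained in the tail of $Y_\bullet$, and continuity of $\vol^+_{X|V}$ on the big cone should then force equality of the bodies on a dense set of parameters $t$, with convexity and closedness propagating this equality to all $t > 0$. Should this volume-based route prove too crude, a fallback is to analyse the asymptotic valuations defining $\oklim$ directly via the divisorial Zariski / $\sigma$-decomposition, whose relevant numerical data is determined by the class $[D]$ alone.
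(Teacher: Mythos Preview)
Your forward direction is correct and matches the paper's argument (strictly speaking it uses \cite[Proposition 4.1]{lm-nobody}, the numerical invariance of $\okbd_{Y_\bullet}$ on big classes, rather than Jow's theorem proper).

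The converse, however, has a real gap: the translation identity $\oklim_{Y_\bullet}(D+tH)=\oklim_{Y_\bullet}(D)+\okbd_{Y_\bullet}(tH)$ is \emph{false} in general, and your volume argument cannot rescue it. Take $X$ the blow-up of $\P^2$ at a point with exceptional curve $E$ and $L=\pi^*\mc O_{\P^2}(1)$, set $D=E$, $H=2L-E$, and use the flag $Y_1=E$, $Y_2=\{x\}\subset E$. Then $\oklim_{Y_\bullet}(E)=\{(1,0)\}$, while for $0<t<1$ one computes (via Theorem \ref{surfokbd})
\[
\okbd_{Y_\bullet}(E+tH)=\{\,1-t\le x_1\le 1+t,\ 0\le x_2\le x_1-1+t\,\},\qquad
\{(1,0)\}+\okbd_{Y_\bullet}(tH)=\{\,1\le x_1\le 1+t,\ 0\le x_2\le x_1-1+t\,\}.
\]
These have areas $2t^2$ and $\tfrac{3}{2}t^2$ respectively, so not even the volumes match; the superadditive inclusion is strict. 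More conceptually, once $t>0$ the divisor $D+tH$ is big, so $\oklim_{Y_\bullet}(D+tH)$ is $n$-dimensional with Euclidean volume $\tfrac{1}{n!}\vol_X(D+tH)$, whereas the Minkowski sum has no reason to realise this volume when $\oklim_{Y_\bullet}(D)$ is lower-dimensional. Thus one cannot reconstruct $\okbd_{Y_\bullet}(D+tH)$ from $\oklim_{Y_\bullet}(D)$ and $tH$ alone, and the bootstrap to Jow's theorem does not go through.

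Your fallback is pointed in the right direction but is not yet a proof. The paper's argument makes this precise by adapting Jow's computation directly to $\oklim$: from flags with $Y_1=E$ one reads off $\ord_E(\|D\|)$ (Lemma \ref{lem-div oklim}), recovering the divisorial components of $\bm(D)$ and their coefficients; then for a sufficiently general flag one shows (Lemma \ref{jow lemma}) that the length of the slice $\oklim_{Y_\bullet}(D)_{x_1=\cdots=x_{n-1}=0}$ equals $Y_{n-1}\cdot D-\sum_i\sum_{p\in Y_{n-1}\cap E_i}\ord_{E_i}(\|D\|)$. Since the correction term is already determined by the limiting Okounkov bodies, equality of all $\oklim_{Y_\bullet}$ forces $Y_{n-1}\cdot D=Y_{n-1}\cdot D'$ for a basis of $N_1(X)_\R$, hence $D\equiv D'$.
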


Since the limiting Okounkov body is a numerical invariant, we can consider a function
$$
\oklim_{Y_\bullet} :  \overline{\text{Eff}}(X) \to \R^n_{\geq 0}, ~~\xi \mapsto \oklim_{Y_\bullet}(\xi)
$$
for a fixed admissible flag $Y_\bullet$ on a smooth projective variety $X$ of dimension $n$.
By \cite[Theorem B]{lm-nobody} and the definition of the limiting Okounkov body, the shapes of limiting Okounkov bodies continuously change. In contrast, Example \ref{okval notsubset oklim} also shows discontinuity of valuative Okounkov bodies at some non-big divisor class.

We finally remark that Boucksom also defined and studied the limiting Okounkov body (the \emph{numerical Okounkov body} in his terminology) in \cite{B}. See Remark \ref{remk-boucksom} for more details on his results. In this paper, we do not use any results in \cite{B}.

\medskip

This paper is organized as follows.
We start in Section \ref{prelimsec} by collecting relevant basic facts on asymptotic invariants such as asymptotic base locus, asymptotic valuation, (augmented) restricted volume, (numerical) Iitaka dimension, etc., and prove some useful properties of Nakayama subvarieties and positive volume subvarieties.
Section \ref{oksec} is the main part of this paper. We first recall the construction of the Okounkov body $\okbd_{Y_\bullet}(D)$ of a big divisor $D$ in Subsection \ref{ok of LM}.
We then introduce and study the valuative Okounkov body $\okval_{Y_\bullet}(D)$ and the limiting Okounkov body $\oklim_{Y_\bullet}(D)$ of a pseudoeffective divisor $D$ in Subsections \ref{okbdnaksubsec} and \ref{oklimsubsec}, respectively. Especially, we prove Theorems \ref{thrm-val}, \ref{thrm-lim}, and \ref{thrm-jow}. Finally, in Section \ref{exsec}, we exhibit various examples of the valuative Okounkov body $\okval_{Y_\bullet}(D)$ and the limiting Okounkov body $\oklim_{Y_\bullet}(D)$.

\subsection*{Aknowledgement}
We are grateful to Mihai Fulger for helpful suggestions and useful comments. We would like to thank the referee for careful reading of our manuscript and for making a number of valuable suggestions.

\section{Preliminaries}\label{prelimsec}

In this section, we collect basic facts and introduce some notions that will be used in later sections.
First, we fix some notations. Throughout the paper, we work over the field $\C$ of complex numbers.
For simplicity, a \emph{variety} in this paper is assumed to be smooth, projective, reduced and irreducible, but a \emph{subvariety} can be singular.
By a \emph{divisor} on a variety $X$, we always mean an $\R$-divisor unless otherwise stated. Since we assume that $X$ is smooth, every divisor is $\R$-Cartier.
A divisor $D$ is  called \emph{pseudoeffective} if its numerical class $[D]\in \N^1(X)_\R:=\N^1(X)\otimes \R$ lies in the pseudoeffective cone $\overline{\text{Eff}}(X)$, the closure of the cone $\N^1(X)_\R$ spanned by the classes of effective divisors. A divisor is called \emph{big} if its numerical class lies in the big cone $\text{Big}(X)$, the interior of $\ol\Eff(X)$.

\subsection{Asymptotic base locus}\hfill

Let $D$ be a divisor on a smooth projective variety $X$.
If $D$ is a $\Q$-divisor, then we define the \emph{stable base locus} of $D$ as
$$
\text{SB}(D):= \bigcap_m \text{Bs}(mD)
$$
where the intersection is taken over all positive integers $m$ such that $mD$ are $\Z$-divisors, and $\text{Bs}(mD)$ denotes the base locus of the linear system $|mD|$. 
Now we assume that $D$ is an $\R$-divisor.
The \emph{augmented base locus} $\bp(D)$ is defined as
$$
\bp(D):=\bigcap_A\text{SB}(D-A)
$$
where the intersection is taken over all ample divisors $A$ such that $D-A$ are $\Q$-divisors.
The \emph{restricted base locus} $\bm(D)$ of $D$ is defined as
$$
\bm(D):=\bigcup_{A}\text{SB}(D+A).
$$
where the union is taken over all ample divisors $A$ such that $D+A$ are $\Q$-divisors.
We have $\bm(D) \subseteq \text{SB}(D) \subseteq \bp(D)$ for a $\Q$-divisor $D$.
One can check that a divisor $D$ is ample (or nef) if and only if $\bp(D)=\emptyset$
(respectively, $\bm(D)=\emptyset$). Furthermore, $D$ is not pseudoeffective (or not big) if and only if $\bm(D)=X$ (respectively, $\bp(D)=X$).
The asymptotic base loci $\bp(D)$ and $\bm(D)$ depend only on the numerical class of $D$ while $\text{SB}(D)$ does not (see \cite[Example 10.3.3]{pos}). For more properties, we refer to \cite{elmnp-asymptotic inv of base}.

\subsection{Asymptotic valuation}\hfill

Let $D$ be a pseudoeffective divisor on a smooth projective variety $X$, and $V\subseteq X$ be an irreducible subvariety of $X$.
When $D$ is big, we define \emph{the asymptotic valuation} of $V$ at $D$ as
$$
\ord_V(||D||):=\inf\{\ord_V(D')\mid D\equiv D'\geq 0\}.
$$
When $D$ is non-big pseudoeffective, we define
$$
\ord_V(||D||):=\lim_{\epsilon\to 0+}\ord_V(||D+\epsilon A||)
$$
for some ample divisor $A$. This definition is independent of the choice of $A$, and the value $\ord_V(||D||)$ depends only on the numerical class $[D]\in\N^1(X)_\R$ (see \cite[Chapter III]{nakayama}).

\begin{remark}
By \cite[Proposition 6.4]{lehmann-red}, if $D$ is an abundant pseudoeffective divisor, then the limit process in computing $\ord_X(||D||)$ is unnecessary. Otherwise, it is necessary in general.
For example, let $\pi : S \to \P^2$ be the blow-up of $\P^2$ at nine general points. Then $|-K_S|$ consists of one irreducible curve $C$, and $|-mK_S|=\{ m C \}$ for any integer $m > 0$. Thus $\ord_C(D')=1$ for every effective divisor $D$ with $D \equiv -K_S$. On the other hand, since $-K_S$ is nef, $\ord_C(||-K_S||)=0$  by Theorem \ref{thrm-B- by sigmanum}.
\end{remark}

The restricted base locus $\bm(D)$ can be characterized in terms of $\ord_{V}(||D||)$ as follows.

\begin{theorem}[{\cite[Proposition 2.8]{elmnp-asymptotic inv of base},\cite[Lemma V.1.9]{nakayama}}]\label{thrm-B- by sigmanum}
Let $D$ be a pseudoeffective divisor on a variety $X$, and $V\subseteq X$ an irreducible subvariety. Then $V\subseteq\bm(D)$ if and only if $\ord_V(||D||)>0$.
\end{theorem}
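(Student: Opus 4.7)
The plan is to reduce the statement to the case of a big $\Q$-divisor, where the equivalence $V\subseteq \text{SB}(B)\iff \ord_V(||B||)>0$ is essentially built into the definition of the asymptotic valuation. Two ingredients from \cite{nakayama,elmnp-asymptotic inv of base} will be invoked throughout: (i) for a big $\Q$-divisor $B$, the numerical asymptotic valuation $\inf\{\ord_V(E):E\equiv B,\;E\geq 0\}$ coincides with the linear-series version $\lim_m \ord_V(\text{Bs}|mB|)/m$; and (ii) the monotonicity $\ord_V(||D+A'||)\leq \ord_V(||D||)$ for every ample $\R$-divisor $A'$, obtained by adding to an effective representative of $D$ a general effective representative of $A'$ whose support avoids $V$.

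For the direction $V\subseteq\bm(D)\Rightarrow \ord_V(||D||)>0$, by the definition of $\bm$ I would pick an ample divisor $A_0$ with $D+A_0$ a $\Q$-divisor and $V\subseteq \text{SB}(D+A_0)$. Then for every $m\geq 1$ making $m(D+A_0)$ integral, $V\subseteq \text{Bs}|m(D+A_0)|$, so every $E\in|m(D+A_0)|$ has $\ord_V(E)\geq 1$ (for non-divisorial $V$, $\ord_V$ is interpreted via the exceptional divisor of a log resolution dominating $V$). Input (i) then yields $\ord_V(||D+A_0||)>0$. Choosing any ample $A$ and a small rational $\epsilon>0$ with $A_0-\epsilon A$ still ample, monotonicity gives $\ord_V(||D+\epsilon A||)\geq \ord_V(||D+A_0||)>0$, and passing $\epsilon\to 0+$ gives $\ord_V(||D||)>0$.

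For the converse, assume $\ord_V(||D||)>0$. Monotonicity (ii) makes $\epsilon\mapsto \ord_V(||D+\epsilon A||)$ decreasing in $\epsilon$, so its limit at $0$ equals its supremum, forcing $\ord_V(||D+\epsilon A||)>0$ for all small $\epsilon>0$. Fix such an $\epsilon$ rational with $D+\epsilon A$ a $\Q$-divisor. By input (i) together with subadditivity of $m\mapsto \ord_V(\text{Bs}|m(D+\epsilon A)|)$, every $E\in|m(D+\epsilon A)|$ satisfies $\ord_V(E)\geq m\,\ord_V(||D+\epsilon A||)>0$, hence $V\subseteq \Supp(E)$ and therefore $V\subseteq \text{Bs}|m(D+\epsilon A)|$ for all admissible $m$. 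Consequently $V\subseteq \text{SB}(D+\epsilon A)\subseteq \bm(D)$.

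The main technical obstacle I anticipate is input (i), the coincidence of the numerical and linear-series asymptotic valuations on the big cone, since the numerical equivalence class is strictly larger than the linear one. This is handled in \cite{nakayama} via an approximation argument based on Kodaira's lemma: any effective $\R$-divisor numerically equivalent to a big $\Q$-divisor $B$ can be approximated by elements of the linear systems $|mB|$ with vanishing order along $V$ converging to the original value, so that the two infima agree and the reduction to linear series becomes legitimate.
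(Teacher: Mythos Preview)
The paper does not give its own proof of this statement: it is quoted verbatim from \cite[Proposition 2.8]{elmnp-asymptotic inv of base} and \cite[Lemma V.1.9]{nakayama} and used as a black box. So there is no in-paper argument to compare against; what follows is an assessment of your sketch on its own merits.

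Your second direction ($\ord_V(\|D\|)>0\Rightarrow V\subseteq\bm(D)$) is fine. The monotonicity argument gives $\ord_V(\|D+\epsilon A\|)>0$ for some small rational $\epsilon$, and then the subadditivity/infimum characterization forces $V\subseteq\text{Bs}|m(D+\epsilon A)|$ for all admissible $m$, hence $V\subseteq\text{SB}(D+\epsilon A)\subseteq\bm(D)$.

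The first direction, however, has a real gap. From $V\subseteq\bm(D)$ you correctly extract an ample $A_0$ with $V\subseteq\text{SB}(D+A_0)$, and conclude that $\ord_V\bigl(\text{Bs}|m(D+A_0)|\bigr)\geq 1$ for every admissible $m$. But this only gives
\[
\ord_V(\|D+A_0\|)=\lim_{m\to\infty}\frac{\ord_V\bigl(\text{Bs}|m(D+A_0)|\bigr)}{m}\;\geq\;\lim_{m\to\infty}\frac{1}{m}=0,
\]
which is vacuous. The implication $V\subseteq\text{SB}(B)\Rightarrow\ord_V(\|B\|)>0$ is \emph{false} for big $\Q$-divisors $B$ in general: take $B$ nef and big but not semiample (so $\text{SB}(B)\neq\emptyset$ while $\bm(B)=\emptyset$, hence $\ord_V(\|B\|)=0$ for every $V$). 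Writing such a $B$ as $(B-A_0)+A_0$ with $A_0$ small ample shows that your inference can fail exactly at the step ``Input (i) then yields $\ord_V(\|D+A_0\|)>0$.'' What is actually needed is the stronger fact that $\ord_V(\|\cdot\|)$ vanishes on an \emph{open} neighborhood of any big class where it vanishes (equivalently, $\{\ord_V(\|\cdot\|)>0\}$ is closed in the big cone); this is precisely the content of Nakayama's lemma you cite but do not use, and it is what allows one to pass from $V\subseteq\text{SB}(D+A_0)$ to a positive asymptotic order after a further small ample perturbation.
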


The asymptotic valuation is a birational invariant: for a birational morphism $f:Y\to X$ with the exceptional divisor $E$ such that $f(E)=V$, we have $\ord_V(||D||)=\ord_E(||f^*(D)||)$ (see \cite[Lemma 1.4]{BBP}).  For more details on asymptotic valuations, see \cite{elmnp-asymptotic inv of base} and \cite[Chapter III]{nakayama}.

\subsection{Augmented restricted volume}\label{volsubsec}\hfill

In this subsection, we introduce the notion of the augmented restricted volume. For this purpose, we first recall the restricted volume function.
Let $D$ be a divisor on a smooth projective variety $X$ of dimension $n$, and $V$ be a $v$-dimensional irreducible subvariety of $X$.
For the natural restriction map $\varphi:H^0(X,\mc O_X(\lfloor D \rfloor))\to H^0(V,\mc O_V(\lfloor D \rfloor))$, we denote the image $\text{Im}(\varphi)$ by $H^0(X|V, \lfloor D \rfloor)$ and its dimension by $h^0(X|V,\lfloor D \rfloor )$. We define the \emph{restricted volume} of $D$ along $V$ as
$$
\vol_{X|V}(D):=\limsup_{m \to \infty} \frac{h^0(X|V,\mc O_X(\lfloor mD \rfloor) )}{m^{v}/v!}.
$$

\begin{remark}\label{restvol-rmrk}
In \cite{elmnp-restricted vol and base loci}, the restricted volume $\vol_{X|V}(D)$ is initially defined for a $\Q$-divisor $D$. If $V \not\subseteq \bp(D)$, then $\vol_{X|V}(D)$ depends only on the numerical class of a $\Q$-divisor $D$ (\cite[Corollary 2.14]{elmnp-restricted vol and base loci}). Furthermore, the restricted volume function uniquely extends to a continuous function
$$
\vol_{X|V} : \text{Big}^V (X) \to \R_{\geq 0}
$$
where $\text{Big}^V(X)$ is the set of all $\R$-divisor classes $\xi$ such that $V$ is not properly contained in any irreducible component of $\bp(\xi)$, and $\vol_{X|V}(\xi)=0$ if and only if $V$ is an irreducible component of $\bp(\xi)$ (\cite[Theorem 5.2]{elmnp-restricted vol and base loci}).

On the other hand, one can easily show that our restricted volume $\vol_{X|V}$ for $\Q$-divisors is homogeneous of degree $v$ so that our definition of $\vol_{X|V}(D)$ coincides with that in \cite{elmnp-restricted vol and base loci} for a $\Q$-divisor $D$. By perturbing by ample divisors and using the continuity property, one can also check that for any $\R$-divisor $D$ such that $[D] \in \text{Big}^V(X)$, our definition of $\vol_{X|V}(D)$ coincides with the value $\vol_{X|V}([D])$ of the continuous function $\vol_{X|V} : \text{Big}^V (X) \to \R_{\geq 0}$ in \cite[Theorem 5.2]{elmnp-restricted vol and base loci}. See also \cite[Remark 2.13]{lehmann-nu}.
\end{remark}

By Remark \ref{restvol-rmrk} and \cite[Corollary 2.14]{elmnp-restricted vol and base loci}, $\vol_{X|V}(D)$ depends only on the numerical class of an $\R$-divisor $D$, and by \cite[Theorem 5.2]{elmnp-restricted vol and base loci}, we obtain a continuous function
$$
\vol_{X|V} : \text{Big}^V (X) \to \R_{\geq 0},
$$
and $\vol_{X|V}(\xi)=0$ if and only if $V$ is an irreducible component of $\bp(\xi)$. Furthermore, by \cite[Corollary 2.15]{elmnp-restricted vol and base loci}, `$\limsup$' can be replaced by `$\lim$' in the definition of the restricted volume $\vol_{X|V}(D)$ of a divisor $D$ when $V \not\subseteq \bp(D)$.

By letting $V=X$, we recover the classical volume function $\vol_X(D)=\vol_{X|X}(D)$. Note that $ \text{Big}^X (X)=\text{Big}(X)$. Thus we obtain a continuous function $\vol_X : \text{Big}(X) \to \R_{>0}$. Furthermore, the volume function continuously extends to the entire N\'{e}ron-Severi space
$$
\vol_X : \N^1 (X)_{\R} \to \R_{\geq 0},
$$
and $\vol_X(\xi)=0$ if and only if $\xi \not\in \text{Big}(X)$ (see \cite[Corollary 2.2.45]{pos}).
For more details on the functions $\vol_X$ and $\vol_{X|V}$, see \cite{elmnp-restricted vol and base loci}, \cite{pos}, \cite{lehmann-nu}, etc.

If $D$ is a non-big pseudoeffective divisor, then $\bp(D)=X$.
Thus the functions $\vol_{X|V}$ and $\vol_X$ do not capture the subtle asymptotic properties of
pseudoeffective divisors that are not big on $V$ or $X$.
In such situations, the following function seems useful.

\begin{definition}\label{def-volume+}
Let $D$ be a pseudoeffective divisor on a smooth projective variety $X$,
and $V\subseteq X$ be an irreducible subvariety such that $V\not\subseteq\bm(D)$.
The \emph{augmented restricted volume} of $D$ along $V$ is defined as
$$
\vol_{X|V}^+(D):=\lim_{\eps\to 0+} \vol_{X|V}(D+\eps A)
$$
where $A$ is an ample divisor on $X$.
\end{definition}

Note that if $V\not\subseteq\bm(D)$, then $V \not\subseteq \bp(D+\eps A)$ for any ample divisor $A$ and for any $\eps > 0$.
Thus using the above basic properties of $\vol_{X|V}$, we can deduce that in Definition \ref{def-volume+}, the limit exists and $\vol^+_{X|V}(D)$ is independent of the choice of $A$.

We also observe that $\vol_{X|V}^+(D)$ depends only on the numerical class of $D$.
This can be seen as follows.
Let $D \equiv D'$ and $V \not\subseteq \bm(D)=\bm(D')$. Then $\vol_{X|V}(D+\eps A) = \vol_{X|V}(D' + \eps A)$ so that by taking the limit $\eps\to 0$, we obtain $\vol_{X|V}^+(D) = \vol_{X|V}^+(D')$.
By the continuity of the function $\vol_{X|V}$, we also see that
$$
\vol_{X|V}^+(D) = \vol_{X|V}(D) \text{ for $D\in\Bigdiv^V(X)$.}
$$
For a divisor $D$ such that $[D] \in  \overline{\text{Eff}}(X) \setminus \text{Big}(X)$ and $V \not\subseteq \bm(D)$, by definition, $\vol_{X|V}^+(D)$ is the limit of the values of the continuous function $\vol_{X|V}$. Thus we obtain a continuous function
$$
\vol_{X|V}^+ : \overline{\text{Eff}}^V(X) \to \R
$$
where $\overline{\text{Eff}}^V(X) := \text{Big}^V(X) \cup \{ \xi \in \overline{\text{Eff}}(X) \setminus \text{Big}(X) \mid V \not\subseteq \bm(\xi)   \}$.
For $D\in \overline{\text{Eff}}^V(X)$, the following inequalities hold by definition:
$$\vol_{X|V}(D) \leq \vol_{X|V}^+(D) \leq \vol_{V}(D|_V).$$
Both inequalities can be strict in general.

\begin{example}\label{vol<vol+}
Let $S$ be a relatively minimal rational elliptic surface, and $H$ be an ample $\Z$-divisor on $S$.
Take a general element $V \in |kH|$ for a sufficiently large integer $k >0$.
Then $\vol_{S|V}(-K_S)=\limsup\limits_{m \to \infty} \frac{h^0(S, \mathcal{O}_S(-mK_S))}{m}$ is apparently independent of $k$ and $H$.
However, one can check that $\vol_{S|V}^+(-K_S)= (-K_S) \cdot (kH)$. Thus  $\vol_{S|V}^+(-K_S)$ can be arbitrarily large depending on $k$. Thus we have $\vol_{S|V}(-K_S) < \vol_{S|V}^+(-K_S)$.
\end{example}

\subsection{Nakayama subvariety}\label{subsec-Iitaka}\hfill

In this subsection, we introduce and study Nakayama subvarieties of divisors, which are closely related to the Iitaka dimension. Let $X$ be a smooth projective variety.

\begin{definition}
For a divisor $D$ on a variety $X$, the \emph{Iitaka dimension} of $D$ is defined as
$$
\kappa(D):=\max \left\{ k \in \Z_{\geq 0} \left|\; \limsup_{m \to \infty} \frac{h^0(X, \mathcal{O}_X(\lfloor mD \rfloor))}{m^k}>0 \right.\right\}
$$
if  $h^0(X, \mathcal{O}_X(\lfloor mD \rfloor)) \neq 0$ for some $m > 0$, and $\kappa(D):=-\infty$ otherwise.
\end{definition}

\begin{remark}\label{remk-kappa=m^k}
If $D$ is a divisor such that $\kappa(D)\geq 0$, then there exists an integer $m_0\geq 1$ such that $h^0(X,\mc O_X(\lfloor mm_0D\rfloor))\sim m^{\kappa(D)}$ for $m\gg 0$ (\cite[Theorem II.3.7]{nakayama}).
For this $m_0$, we have $\kappa(D)=\kappa(m_0D)$.
Thus in many situations below, we may assume that $m_0=1$.
\end{remark}

\begin{remark}\label{remk-kappa not num}
Note that the Iitaka dimension $\kappa(D)$ is not a numerical invariant in general (\cite[Example 6.1]{lehmann-nu}). Moreover, it is not an $\R$-linear invariant in general, that is, $\kappa(D) \neq \kappa(D')$ even if $D \sim_{\R} D'$.
For instance, let $P,Q$ be distinct points on $\P^1$. For an irrational number $a \in \R$, we consider an $\R$-divisor $D:=a(P-Q)$. We have $D \sim_{\R} 0$, so $D$ is effective up to $\R$-linear equivalence.
But $\kappa(a(P-Q))=-\infty$ since $\lfloor ma(P-Q) \rfloor = \lfloor ma \rfloor P -(\lfloor ma \rfloor +1)Q < 0$ for any integer $m>0$.
However, if both $D, D'$ are effective and $D \sim_{\R} D'$, then $\kappa(D)=\kappa(D')$ holds true (see \cite[Remark 2.1]{G}).
\end{remark}

\begin{remark}\label{remk-on kappa}
We have the following implications:
$$
\text{$D$ is effective $\Rightarrow$ $\kappa(D) \geq 0$ $\Rightarrow$ $|D|_{\R} \neq 0$.}
$$
If the latter condition holds, then we say that $D$ is effective up to $\R$-linear equivalence.
By what we have seen above, it is easy to check that the converses are not true in general.
\end{remark}

The following is well known, but we include the whole proof for reader's convenience.

\begin{lemma}\label{r-principal}
Let $P$ be a principal $\R$-divisor. Then we may write $P = \sum_{i=1}^k a_i \divisor(f_i)$ where $a_1, \cdots, a_k$ are real numbers linearly independent over $\Q$ and $f_1, \cdots, f_k \in \C(X)$ are rational functions.
\end{lemma}

\begin{proof}
Let $k$ be the minimum of integers $m$ such that we may write $P=\sum_{i=1}^m a_i \text{div} (f_i)$ where $a_i$ are real numbers and $f_i \in \C(X)$ are rational functions.
We now fix an expression $P= \sum_{i=1}^k a_i \text{div}(f_i)$.
Suppose that $a_1, \cdots, a_k$ are not linearly independent over $\Q$. Possibly by reordering the indices, we have
$$
a_k = \frac{p_1}{q_1}  a_1 + \cdots + \frac{p_{k-1}}{q_{k-1}}  a_{k-1}
$$
where $p_i, q_i$ are relatively prime integers with $q_i \neq 0$.
Then we obtain
$$
P = \sum_{i=1}^{k-1} a_i \left( \text{div}(f_i) + \frac{p_i}{q_i} \text{div}(f_k) \right) = \sum_{i=1}^{k-1} \frac{a_i}{q_i} \text{div}\left( f_i^{q_i} f_k^{p_i}\right)
$$
which is a contradiction to the minimality of $k$.
\end{proof}

The following lemma is useful when we deal with  effective $\R$-divisors.

\begin{lemma}\label{r-divisor}
Let $D$ be an effective divisor on a variety $X$, and $D' \in |D|_{\R}$ be any element. Suppose that we can write
$D=\sum_i a_i E_i$ and $D'=\sum_j a_j' E_j'$ where $E_i, E_j'$ are  prime divisors and $a_i, a_j' > 0$.
Then for any sufficiently small $\eps > 0$, there exist a sufficiently large and divisible integer $m>0$ and an effective divisor $G \in |\lfloor mD \rfloor|$ such that we can write $\frac{G}{m} = \sum_i b_i E_i + \sum_j b_j' E_j'$ where  $ b_i < \eps$ and $|a_j' - b_j'| < \eps$ for all $i, j$.
\end{lemma}

\begin{proof}
Since $D \sim_{\R} D'$, there exist rational functions $f_k \in \C(X)$ and real numbers $c_k$ such that $D = D' + \sum_k c_k \text{div}(f_k)$. By Lemma \ref{r-principal}, we may assume that $c_k$ are linearly independent over $\Q$. Then $\Supp \text{div}(f_k)\subseteq\Supp(D) \cup \Supp(D')$ for each $k$.
There are arbitrarily small real numbers $c_k'$ such that $c_k - c_k'$ are rational numbers, $ D' +  \sum_k c_k'  \text{div}(f_k)$ is an effective divisor, and $|\mult_{E_j'}(D' +  \sum_k c_k'  \text{div}(f_k)) - a_j'|$ are arbitrarily small for all $j$. We now have
$$
D= \left( D' +  \sum_k c_k'  \text{div}(f_k) \right) + \sum_k (c_k - c_k')  \text{div}(f_k) \sim_{\Q} \left( D' +  \sum_k c_k'  \text{div}(f_k) \right).
$$
For a sufficiently large and divisible integer $m>0$, let $G:= \left\lfloor m \left(D' + \sum_k c_k' \text{div}(f_k) \right) \right\rfloor$. Then we have
$$
\lfloor mD \rfloor \sim G ~~\text{ and }~~ \frac{G}{m} = \frac{\lfloor \sum_j ma_j' E_j' + \sum_k mc_k' \text{div}(f_k) \rfloor}{m}.
$$
Since $m>0$ is sufficiently large, $\Supp(\text{div}(f_k)) \subseteq \Supp(D) \cup \Supp(D')$ and $c_k'$ are arbitrarily small for all $k$, we obtain the assertion.
\end{proof}

Now we define the Nakayama subvariety associated to an effective divisor.

\begin{definition}\label{def-nakayama locus}
Let $D$ be a divisor on a variety $X$ with $\kappa(D) \geq 0$.
An irreducible subvariety $U \subseteq X$  is called a \emph{Nakayama subvariety} of $D$ if $\kappa(D)=\dim U$ and the natural map
$$
H^0(X, \mc O_X(\lfloor mD \rfloor)) \to H^0(U, \mc O_U(\lfloor mD|_U \rfloor))
$$
is injective (or equivalently, $H^0(X, \mc I_U \otimes \mc O_X(\lfloor mD \rfloor))=0$ where $\mc I_U$ is an ideal sheaf of $U$ in $X$) for every integer $m \geq 0$.
\end{definition}

By definition, $U=X$ is the (unique) Nakayama subvariety of $D$ if and only if $D$ is big.
The following lemma guarantees the existence of smooth Nakayama subvarieties.
It also shows that a Nakayama subvariety is not unique in general.

\begin{proposition}[cf. {\cite[Lemma V.2.11]{nakayama}}]\label{prop-gen subvar=naka}
Let $D$ be a divisor on a variety $X$ with $\kappa(D) \geq 0$.
Any general subvariety $U \subseteq X$ of dimension $\kappa(D)$ is a Nakayama subvariety of $D$.
In particular, $U=A_1 \cap \cdots \cap A_{n-\kappa(D)}$ is a Nakayama subvariety of $D$ where $A_1, \cdots, A_{n-\kappa(D)}$ are general members of sufficiently ample linear systems.
\end{proposition}

\begin{proof}
Consider a dominant rational map $\varphi_m : X \dashrightarrow Z_m$ induced by a complete linear series $|\lfloor mD \rfloor |$ for any integer $m>0$ such that $|\lfloor mD \rfloor | \neq \emptyset$. We have $\dim Z_m \leq \kappa(D) = \dim U$. Let $f : Y \to X$ be the blow-up at $U$ with the exceptional divisor $E$. Since $U \subseteq X$ is general, $f^*(\lfloor mD \rfloor) - k E$ is not effective for any $k > 0$. Thus $H^0(X,\mc I_{U} \otimes \mathcal{O}_X(\lfloor mD \rfloor))=0$, and hence, $U$ is a Nakayama subvariety of $D$. The latter statement also follows from this argument.
\end{proof}

Nakayama subvarieties have the following properties.

\begin{lemma}\label{lem-same nakayama loci}
If $D, D'$ are divisors on a variety $X$ such that $\kappa(D), \kappa(D') \geq 0$ and $D \sim_\R D'$, then they have the same Nakayama subvarieties.
\end{lemma}

\begin{proof}
The given conditions imply that $\kappa(D)= \kappa(D')$.
Suppose that $U$ is a Nakayama subvariety of $D$, but it is not a Nakayama subvariety of $D'$.
Then for some integer $m>0$, there exist an effective $\Z$-divisor $E$ such that $E \sim \lfloor mD' \rfloor$ and an irreducible component $E_0$ of $E$ that contains $U$. This means that there exists an element $D'' \in |D'|_{\R} = |D|_{\R}$ such that $E_0$ is a component of $D''$. By Lemma \ref{r-divisor},
we can find an integer $m' > 0$ such that some element of $\lfloor m'D \rfloor$ contains $E_0$ as an irreducible component.
Then such an element should also contain $U$. Thus we have a contradiction to our assumption that $U$ is a Nakayama subvariety of $D$.
\end{proof}

Let $V$ be a smooth subvariety of a variety $X$.
A birational morphism $f:Y\to X$ is said to be \emph{$V$-birational} if $V$ is not contained in the image of the $f$-exceptional locus.
We denote by $\widetilde{V}$ the proper transform of $V$ on $Y$. Then the pair $(Y, \widetilde{V})$ is called a \emph{$V$-birational model} of $X$ (see \cite[Definition 2.10]{lehmann-nu}).

\begin{proposition}
Let $U \subseteq X$ be a Nakayama subvariety of a divisor $D$ on a variety $X$ with $\kappa(D) \geq 0$.
If $f:Y\to X$ is a $U$-birational morphism, then $\widetilde{U}$ is also a Nakayama subvariety of $f^*D$.
\end{proposition}

\begin{proof}
Suppose that $\widetilde{U}$ is not a Nakayama subvariety of $f^*D$. Then there exists an integer $m>0$ and an effective $\Z$-divisor $E$ on $Y$ such that $E \sim \lfloor mf^*D \rfloor$ and $\widetilde{U} \subseteq \Supp(E)$. Then $f_*E \sim f_*\lfloor mf^*D \rfloor =\lfloor mD \rfloor$ and $U \subseteq \Supp(f_*E)$. By definition, this is a contradiction.
\end{proof}

\begin{theorem}\label{lem-D big on naka}
Let $D$ be a divisor on a variety $X$ with $\kappa(D) \geq 0$, and $U\subseteq X$ be its Nakayama subvariety.
Then $D|_U$ is big.
\end{theorem}

\begin{proof}
By the definition of Nakayama subvariety, we have
$$h^0(U,\mathcal{O}_U (\lfloor mD|_U \rfloor)) \geq h^0(X,\mathcal{O}_X(\lfloor mD \rfloor))$$
for every integer $m \geq 0$. Since $h^0(X,\mc O_X(\lfloor mD \rfloor)) \sim m^{\kappa(D)}$ for $m\gg0$, the assertion follows.
\end{proof}

\subsection{Positive volume subvariety}\label{subsec-posvolloc}\hfill

In this subsection, we introduce and study positive volume subvarieties of divisors, which are closely related to the numerical Iitaka dimension and the restricted volume.
First, we review the numerical Iitaka dimension.

\begin{definition}\label{def-kappanu}
Let $D$ be a divisor on a variety $X$.
We define the \emph{numerical Iitaka dimension} of $D$ as the nonnegative integer
$$
\kappanu(D):= \max\left\{k \in \Z_{\geq0} \left|\; \limsup_{m \to \infty} \frac{h^0(X,\mathcal{O}_X(\lfloor mD \rfloor + A))}{m^k}>0 \right.\right\}
$$
for some fixed ample Cartier divisor $A$ if $h^0(X,\mathcal{O}_X(\lfloor mD \rfloor + A))\neq\emptyset$ for infinitely many $m>0$ and we let $\kappanu(D):=-\infty$ otherwise.
\end{definition}

The numerical Iitaka dimension $\kappanu(D)$ depends only on the numerical class $[D]\in\N^1(X)_{\R}$. Note that $D$ is pseudoeffective if and only if $\kappanu(D) \geq 0$.
One can easily check that $\kappa(D) \leq \kappanu(D)$ holds and the inequality is strict in general (see \cite[Example 6.1]{lehmann-nu}).
However, if $\kappanu(D)=\dim X$, then $\kappa(D)=\dim X$.
See  \cite{nakayama}, \cite{lehmann-nu}, and \cite{Eck} for detailed properties of $\kappa$ and $\kappanu$.

The following theorem relates the numerical Iitaka dimension $\kappanu(D)$ and $\vol^+_{X|V}(D)$.

\begin{theorem}[{\cite[Theorem 1.1]{lehmann-nu}}]\label{thrm-vol+=kappanu}
Let $D$ be a pseudoeffective divisor on a variety $X$.
Then we have
$$
\kappanu(D)=\max\{\dim W \mid \vol_{X|W}^+(D)>0\;\text{ and }\; W\not\subseteq\bm(D)\}.
$$
\end{theorem}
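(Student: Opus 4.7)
The plan is to prove both inequalities in the equation. Write $k := \kappanu(D)$ and $M := \max\{\dim W \mid \vol_{X|W}^+(D) > 0,\ W \not\subseteq \bm(D)\}$. The ``easy'' direction $M \leq k$ extracts cohomology growth from a witness subvariety, while the ``hard'' direction $k \leq M$ constructs a witness as a general complete intersection of ample divisors.

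For $M \leq k$, I would take $W$ of dimension $M$ realizing the maximum and fix an ample Cartier divisor $A$. By Definition \ref{def-volume+} there exists a rational $\epsilon_0 > 0$ such that $\vol_{X|W}(D + \epsilon A) > 0$ for all rational $0 < \epsilon \leq \epsilon_0$, whence $h^0(X|W, \lfloor p(D + \epsilon A)\rfloor)\gtrsim p^M$ as $p \to \infty$ through sufficiently divisible $p$. Since $H^0(X|W, \cdot)$ is by construction a quotient of $H^0(X, \cdot)$, the same lower bound applies to $h^0(X, \mc O_X(\lfloor p(D+\epsilon A)\rfloor))$. Relating $p$ and $\epsilon$ so that $p\epsilon A$ represents a fixed ample Cartier class, and invoking the standard equivalent growth characterizations of $\kappanu$ recorded in \cite{nakayama}, one concludes $\kappanu(D) \geq M$.

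For $k \leq M$, I would construct $W = H_1 \cap \cdots \cap H_{n-k}$ as a transverse intersection of $(n-k)$ general very ample divisors. By Bertini, $W$ is smooth and irreducible of dimension $k$; and for any fixed proper closed subset $Z \subsetneq X$, genericity of the $H_i$ forces $\dim(W \cap Z) \leq k-1$, so in particular $W \not\subseteq \bm(D)$. The crux is to verify $\vol^+_{X|W}(D) > 0$. The hypothesis $\kappanu(D) = k$ provides (via Nakayama's equivalent definitions) the asymptotic $\vol_X(D + \epsilon A) \gtrsim \epsilon^{n-k}$ as $\epsilon \to 0^+$; one then applies a slicing inequality successively along the $H_i$, converting the factor $\epsilon^{n-k}$ into a positive constant lower bound for $\vol_{X|W}(D + \epsilon A)$ uniform in small $\epsilon$, and passes to the limit $\epsilon \to 0^+$.

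The main obstacle lies in this slicing step. The derivative formula $\tfrac{d}{dt}\vol_X(D+tH)|_{t=0^+} = n \cdot \vol_{X|H}(D)$ from \cite{elmnp-restricted vol and base loci} provides an \emph{upper} bound on $\vol_{X|H}$ in terms of $\vol_X$, but we need a lower bound of matching order. A matching lower bound requires additional input: either the concavity of $\vol_X^{1/n}$ on the big cone combined with a uniform perturbation argument, or the theory of positive products, which would recast the question as showing $\langle D^k \rangle \cdot [W] > 0$ for general complete intersections $W$. In either formulation, the key difficulty is iterating the slicing $(n-k)$ times while retaining positivity through every restriction, using the genericity of the $H_i$ to ensure that each $W_i = H_1 \cap \cdots \cap H_i$ sits inside the locus $\Bigdiv^{W_i}(X)$ where the restricted volume is continuous and strictly positive on a small ample perturbation of $D$.
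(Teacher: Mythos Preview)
The paper does not prove this statement: it is quoted directly from \cite[Theorem~1.1]{lehmann-nu} and used throughout as a black box. There is therefore no proof in the paper to compare your proposal against.

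On the substance of your sketch: the direction $M \leq k$ is essentially correct once one invokes the equivalence of the various growth-rate formulations of $\kappanu$ from \cite{nakayama} and \cite{lehmann-nu}. For the direction $k \leq M$, you have correctly located the genuine difficulty. The derivative formula from \cite{elmnp-restricted vol and base loci} bounds $\vol_{X|H}$ \emph{above} in terms of $\vol_X$, which is the wrong direction, and iterating a slicing inequality $(n-k)$ times while maintaining a uniform positive lower bound is exactly what requires the machinery of positive products and movable intersection numbers that Lehmann develops. Your proposal identifies the right obstacle but does not resolve it; the concavity-of-$\vol^{1/n}$ route you mention also does not straightforwardly yield the needed lower bound without that machinery.

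It may be useful to note that the paper does contain a closely related construction: Proposition~\ref{general pos vol loc} shows that a general complete intersection $V = A_1 \cap \cdots \cap A_{n-\kappanu(D)}$ of ample divisors is a positive volume subvariety. But that argument \emph{presupposes} the existence of some positive volume subvariety $W$ (supplied by the cited Theorem~\ref{thrm-vol+=kappanu}) and then compares $V$ to $W$ via the point-counting interpretation of restricted volume in \cite[Theorem~B]{elmnp-restricted vol and base loci}. So even the paper's explicit construction is an application of Lehmann's theorem rather than an independent proof of it, and in particular does not supply the missing step in your approach.
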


The numerical Iitaka dimension $\kappanu(D)$ actually coincides with many other invariants defined with $D$.
For details, see \cite{lehmann-nu} and \cite{Eck}.

Now we define the positive volume subvariety associated to a pseudoeffective divisor.

\begin{definition}\label{def-pos vol locus}
Let $D$ be a pseudoeffective divisor on a variety $X$.
A subvariety $V\subseteq X$ of dimension $\kappanu(D)$
such that $\vol_{X|V}^+(D)>0$ and $V\not\subseteq\bm(D)$ is called a \emph{positive volume subvariety} of $D$.
(In other words, a positive volume subvariety $V \subseteq X$ is the subvariety at which the maximum in Theorem \ref{thrm-vol+=kappanu} is attained.)
\end{definition}

\begin{remark}
In \cite[Lemma 4.3]{CPW}, we show that $\vol_{X|V}^+(D)>0$ implies $V \not\subseteq \bm(D)$. Thus  the condition $V\not\subseteq\bm(D)$ in the definition of the positive volume subvariety is redundant.
\end{remark}

\begin{example}
Let $S$ be a relatively minimal rational elliptic surface with a reducible singular fiber, and $E$ a $(-2)$-curve in a singular fiber. Then $E \not\subseteq \bm(-K_S)$ but $\vol_{S|E}^+(-K_S)=0$.
In this example, we see that $V \not\subseteq \bm(D)$ and $\dim V = \kappanu(D)$ does not imply $\vol_{X|V}^+(D) >0$.
\end{example}

By definition, if $D$ is a big divisor on $X$, then $V=X$ is the unique positive volume subvariety of $D$. Theorem \ref{thrm-vol+=kappanu} guarantees the existence of a positive volume subvariety in general. Furthermore, we can also find a smooth positive volume subvariety by the following. It also shows that a positive volume subvariety is not unique in general.

\begin{proposition}\label{general pos vol loc}
Let $D$ be a pseudoeffective divisor on a variety $X$ of dimension $n$.
If $\kappanu(D)<n$, then 
the subvariety $V:=A_1 \cap \cdots \cap A_{n - \kappanu(D)}$ satisfies $\vol_{X|V}^+(D)>0$, where  $A_1, \cdots, A_{n-\kappanu(D)}$ are general members of sufficiently ample linear systems. In particular, $V$ is a positive volume subvariety of $D$.
\end{proposition}

\begin{proof}
By Theorem \ref{thrm-vol+=kappanu}, there exists a positive volume subvariety $W$ of $D$. In particular, we have $\vol_{X|W}^+(D)>0$.
We can take a sequence $\{ H_i \}_{i \in \Z_{\geq 0}}$ of ample divisors on $X$ such that each $D+H_i$ is a $\Q$-divisor and $H_i \to 0$ as $i \to \infty$.
For a large and sufficiently divisible integer $k$, choose $\kappanu(D)$ general divisors $E_{k,i}^{1}, \cdots, E_{k,i}^{\kappanu(D)} \in |k(D+ H_i)|$.
Since $A_1, \cdots, A_{n-\kappanu(D)}$ are sufficiently ample divisors, we have
$$
\# (V \cap E_{k,i}^1 \cap \cdots \cap E_{k,i}^{\kappanu(D)} \setminus \text{SB}(D+H_i)) \geq \# (W \cap E_{k,i}^1 \cap \cdots \cap E_{k,i}^{\kappanu(D)} \setminus \text{SB}(D+H_i)).
$$
Thus by applying \cite[Theorem B]{elmnp-restricted vol and base loci}, we obtain
$$
\begin{array}{rl}
 \vol_{X|V}(D+ H_i) &=\lim\limits_{k \to \infty} \frac{\# (V \cap E_{k,i}^1 \cap \cdots \cap E_{k,i}^{\kappanu(D)} \setminus \text{SB}(D+H_i))}{k^{\kappanu(D)}}\\
&\geq  \lim\limits_{k \to \infty} \frac{\# (W \cap E_{k,i}^1 \cap \cdots \cap E_{k,i}^{\kappanu(D)} \setminus \text{SB}(D+H_i))}{k^{\kappanu(D)}}=\vol_{X|W}(D+ H_i).
\end{array}
$$
This implies that $\vol_{X|V}^+(D) \geq \vol_{X|W}^+(D)>0$.
Since $A_1, \cdots, A_{n-\kappanu(D)}$ are sufficiently ample, we also have $V \not\subseteq \bm(D)$.
\end{proof}

We prove some notable properties of positive volume subvariety.

\begin{lemma}\label{lem-same +vol locus}
If $D$ and $D'$ are pseudoeffective divisors on a variety $X$ such that $D\equiv D'$, then they have the same positive volume subvarieties.
\end{lemma}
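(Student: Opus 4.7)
The proof should be essentially immediate once one unwinds the definition of a positive volume subvariety and invokes the numerical invariance of each of its three defining ingredients. Recall that $V \subseteq X$ is a positive volume subvariety of $D$ precisely when (i) $\dim V = \kappa_\nu(D)$, (ii) $\vol^+_{X|V}(D) > 0$, and (iii) $V \not\subseteq \bm(D)$. The plan is to show that each of these three conditions depends only on the numerical class of the divisor.

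For (i), the numerical Iitaka dimension $\kappa_\nu$ is a numerical invariant by the remark immediately following Definition \ref{def-kappanu}, so $\kappa_\nu(D) = \kappa_\nu(D')$ and the dimension condition is preserved. For (iii), the restricted base locus $\bm$ depends only on the numerical class (as noted in the discussion following the definition of the asymptotic base loci in Subsection 2.1), so $\bm(D) = \bm(D')$ and the condition $V \not\subseteq \bm(D)$ is equivalent to $V \not\subseteq \bm(D')$. For (ii), we invoke the fact recorded just after Definition \ref{def-volume+} that the augmented restricted volume $\vol^+_{X|V}$ depends only on the numerical class of its input, so $\vol^+_{X|V}(D) = \vol^+_{X|V}(D')$, and in particular positivity is preserved.

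Putting these three observations together, $V$ satisfies the defining conditions for being a positive volume subvariety of $D$ if and only if it satisfies them for $D'$. Hence $D$ and $D'$ have exactly the same positive volume subvarieties.

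There is no real obstacle here: the statement reduces to invoking three previously-established numerical invariance facts, one for each ingredient in the definition. The only mild care needed is to confirm that the condition $V \not\subseteq \bm(D)$ is exactly the domain condition built into the definition of $\vol^+_{X|V}(D)$, so that the value $\vol^+_{X|V}(D')$ is even defined once we know $V \not\subseteq \bm(D) = \bm(D')$; but this is automatic.
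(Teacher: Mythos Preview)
Your proposal is correct and follows essentially the same approach as the paper: both argue by noting that each ingredient in the definition of positive volume subvariety is a numerical invariant. Your version is slightly more explicit in separating out the condition $\dim V = \kappa_\nu(D)$ and invoking the numerical invariance of $\kappa_\nu$, whereas the paper leaves this implicit and focuses on $\bm(D)=\bm(D')$ together with the numerical invariance of $\vol^+_{X|V}$; but the underlying argument is the same.
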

\begin{proof}
Note first that $\bm (D) = \bm (D')$.
The statement then follows from the fact that $\vol^+_{X|V}(D)$ depends only on the numerical class $[D]$  for any $V \not\subseteq \bm(D)=\bm(D')$.
\end{proof}

\begin{proposition}\label{v-bir posvol loc}
Let $V\subseteq X$ be a positive volume subvariety of a pseudoeffective divisor $D$ on a variety $X$.
If $f:Y\to X$ is a $V$-birational morphism, then $\widetilde{V}$ is also a positive volume subvariety of $f^*D$.
\end{proposition}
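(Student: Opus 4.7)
\emph{Plan.} To show $\widetilde{V}$ is a positive volume subvariety of $f^*D$, I will verify the three conditions of Definition~\ref{def-pos vol locus}: (i) $\dim\widetilde{V}=\kappanu(f^*D)$, (ii) $\widetilde{V}\not\subseteq\bm(f^*D)$, and (iii) $\vol^+_{Y|\widetilde{V}}(f^*D)>0$. The heart of the argument will be the identity $\vol^+_{Y|\widetilde{V}}(f^*D)=\vol^+_{X|V}(D)$, from which (iii) follows because $V$ is already a positive volume subvariety of $D$.

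First I would dispatch (i): $V$-birationality of $f$ ensures that $f|_{\widetilde V}\colon\widetilde V\to V$ is birational, so $\dim\widetilde{V}=\dim V$; and since $\kappanu$ is a numerical birational invariant between smooth varieties, $\kappanu(f^*D)=\kappanu(D)=\dim V$. Next, for (ii), by Theorem~\ref{thrm-B- by sigmanum} it suffices to show $\ord_{\widetilde{V}}(\|f^*D\|)=0$. Because $f$ is an isomorphism at the generic point of $V$, for any ample $A$ on $X$ and any effective $\R$-divisor $D'$ with $D'\equiv D+\epsilon A$, the pullback $f^*D'$ is effective and satisfies $\ord_{\widetilde{V}}(f^*D')=\ord_V(D')$. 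Taking infima and then letting $\epsilon\to 0^+$ in the defining formula for $\ord_\bullet(\|\cdot\|)$ yields $\ord_{\widetilde{V}}(\|f^*D\|)=\ord_V(\|D\|)=0$.

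The main work is (iii). Fix ample divisors $A_X$ on $X$ and $A_Y$ on $Y$. Since $\bigcap_{\epsilon>0}\bp(D+\epsilon A_X)=\bm(D)$ does not contain $V$, there exists $\epsilon_0>0$ with $V\not\subseteq\bp(D+\epsilon A_X)$ for $0<\epsilon\leq\epsilon_0$; combined with $\widetilde V\not\subseteq\Exc(f)$ (from $V$-birationality) and the standard compatibility of $\bp$ with birational pullback, this upgrades to $\widetilde{V}\not\subseteq\bp(f^*(D+\epsilon A_X))$ for the same range of $\epsilon$. Because $D+\epsilon A_X$ is big, so is $f^*(D+\epsilon A_X)$, and the birational invariance of the restricted volume for big divisors (as developed in \cite{elmnp-restricted vol and base loci}) gives
$$
\vol_{Y|\widetilde{V}}\bigl(f^*(D+\epsilon A_X)\bigr)=\vol_{X|V}(D+\epsilon A_X).
$$
Letting $\epsilon\to 0^+$, the right-hand side tends to $\vol^+_{X|V}(D)>0$ by Definition~\ref{def-volume+}. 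For the left-hand side, continuity of $\vol^+_{Y|\widetilde V}$ on $\overline{\Eff}^{\widetilde V}(Y)$ together with independence of the perturbing class (the nef-and-big pullback $f^*A_X$ can stand in for an ample $A_Y$ in the limit, as both perturbations remain inside $\overline{\Eff}^{\widetilde V}(Y)$) will identify the limit with $\vol^+_{Y|\widetilde V}(f^*D)$.

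The main obstacle is this last step: specifically, justifying that the augmented restricted volume $\vol^+$ can be computed using the nef-and-big class $f^*A_X$ in place of an ample class, and locating in \cite{elmnp-restricted vol and base loci} the precise form of birational invariance needed for the equality of restricted volumes on the big cone. Once these technicalities are settled, the identity $\vol^+_{Y|\widetilde V}(f^*D)=\vol^+_{X|V}(D)$ follows by continuity and the positivity claim is immediate; parts (i) and (ii) are essentially formal consequences of $V$-birationality.
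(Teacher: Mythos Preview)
Your proposal is correct and follows essentially the same three-step verification as the paper. The only notable differences are: for (ii) the paper argues by the image of $\bm$ under $f$ (if $\widetilde V\subseteq\bm(f^*D)$ then $V=f(\widetilde V)\subseteq f(\bm(f^*D))=\bm(D)$), rather than via asymptotic orders; and for (iii) the paper resolves exactly the obstacle you flag by the concrete device of choosing an effective $E$ on $Y$ with $f^*A-E$ ample, so that $\vol^+_{Y|\widetilde V}(f^*D)=\lim_{\epsilon\to0^+}\vol_{Y|\widetilde V}(f^*D+\epsilon(f^*A-E))$ by definition, and then comparing this to $\vol_{Y|\widetilde V}(f^*D+\epsilon f^*A)$ (which equals $\vol_{X|V}(D+\epsilon A)$ by \cite[Lemma 2.4]{elmnp-restricted vol and base loci}) via the continuity of $\vol_{Y|\widetilde V}$ on $\Bigdiv^{\widetilde V}(Y)$—the check $\widetilde V\not\subseteq\bp(f^*D+f^*A)$ being supplied by \cite[Proposition 2.3]{BBP}. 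Your appeal to continuity of $\vol^+_{Y|\widetilde V}$ on $\overline{\Eff}^{\widetilde V}(Y)$ achieves the same end more abstractly.
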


\begin{proof}
The birational transform $\widetilde V\subseteq Y$ of $V$ has dimension $\dim \widetilde{V}=\dim V=\kappanu(D)=\kappanu(f^*D)$.
If $\widetilde V\subseteq\bm(f^*(D))$, then $V=f(\widetilde V)\subseteq f(\bm(f^*(D))=\bm(D)$.
Thus $\widetilde V\not\subseteq\bm(f^*(D))$.
It is enough to show that $\vol_{Y|\widetilde V}^+(f^*D) >0$.
Since $V\not\subseteq\bm(D)$ by definition,
for any ample divisor $A$ on $X$, we have $V\not\subseteq\bp(D+A)$.
We can also check that $\widetilde V\not\subseteq\bp(f^*D+f^*A)$ using \cite[Proposition 2.3]{BBP}.
Note that there is an effective divisor $E$ on $Y$ such that $f^*A-E$ is ample. Thus we have
$$
\vol_{Y|\widetilde V}^+(f^*D) = \lim_{\epsilon \to 0+} \vol_{Y|\widetilde V} (f^*D + \epsilon(f^*A-E)).
$$
The difference $| \vol_{Y|\widetilde V} (f^*D + \epsilon(f^*A-E)) - \vol_{Y|\widetilde V}(f^*D+\epsilon f^*A)|$ can be arbitrarily small when $\epsilon>0$ is sufficiently small.
On the other hand, by \cite[Lemma 2.4]{elmnp-restricted vol and base loci}, we have
$$
\vol_{X|V}^+(D)=\lim_{\epsilon \to 0+}\vol_{X|V}(D+\epsilon A)=\lim_{\epsilon \to 0+}\vol_{Y|\widetilde V}(f^*D+\epsilon f^*A).
$$
Since $\vol_{X|V}^+(D) >0$, it follows that $\vol_{Y|\widetilde V}^+(f^*D) >0$.
\end{proof}

It is known that $D|_V$ is pseudoeffective for any subvariety $V\subseteq X$ such that $V\not\subseteq\bm(D)$. Thus $\kappa(D|_V)\geq 0$. Furthermore, we have the following.

\begin{theorem}\label{D on +vol big}
Let $V$ be a positive volume subvariety of a pseudoeffective divisor $D$ on a variety $X$.
Then $\vol_{V} (D|_V)>0$.
In particular, $D|_V$ is big on $V$ and $\kappanu(D)=\kappanu(D|_V)=\dim V$.
\end{theorem}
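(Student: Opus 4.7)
The plan is to string together the inequality chain $\vol_{X|V}(D)\le\vol_{X|V}^+(D)\le \vol_V(D|_V)$ recorded at the end of Subsection \ref{volsubsec} with the defining positivity $\vol_{X|V}^+(D)>0$ of a positive volume subvariety, and then read off the remaining numerical statements.

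First, since $V$ is a positive volume subvariety of $D$, Definition \ref{def-pos vol locus} gives $V\not\subseteq\bm(D)$ and $\vol_{X|V}^+(D)>0$. In particular, $D|_V$ is pseudoeffective (as noted just above the theorem), so both $\vol_V(D|_V)$ and $\vol_{X|V}^+(D)$ are well defined and nonnegative. I would then invoke the inequality
\[
\vol_{X|V}^+(D)\ \le\ \vol_V(D|_V),
\]
which for $D\in\overline{\Eff}^V(X)$ is stated in Subsection \ref{volsubsec}; if one wishes to justify it on the spot, one perturbs by $\epsilon A$ with $A$ ample, uses the evident inequality $h^0(X|V, m(D+\epsilon A))\le h^0(V, m(D+\epsilon A)|_V)$ at the level of restricted sections, and passes to the limit using the continuity of both volume functions. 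Combining these two facts yields $\vol_V(D|_V)>0$, which by definition means $D|_V$ is a big divisor on $V$.

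Finally, the two numerical identities follow immediately. Bigness of $D|_V$ on $V$ forces $\kappa_\nu(D|_V)=\dim V$, since $\kappa_\nu$ coincides with the ambient dimension whenever the restriction is big (cf.\ the observation after Definition \ref{def-kappanu}). The equality $\kappa_\nu(D)=\dim V$ is built into the definition of a positive volume subvariety.

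There is no serious obstacle in this argument; everything reduces to plugging the definition of a positive volume subvariety into inequalities that have already been recorded. The one point requiring a sentence of care is verifying that the inequality $\vol_{X|V}^+(D)\le\vol_V(D|_V)$ persists outside the big cone, but this is immediate from the continuity of the volume function on $V$ together with the pointwise inequality for the perturbed divisors $D+\epsilon A$.
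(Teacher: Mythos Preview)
Your proof is correct and takes essentially the same approach as the paper: both arguments boil down to the inequality $\vol_{X|V}^+(D)\le\vol_V(D|_V)$, justified by comparing $h^0(X|V,m(D+\epsilon A))$ with $h^0(V,m(D+\epsilon A)|_V)$ for ample perturbations and passing to the limit via continuity of $\vol_V$. The only extra step in the paper's version is an explicit reduction to the case where $V$ is smooth, using a $V$-birational model and Proposition \ref{v-bir posvol loc}; this makes the appeal to continuity of $\vol_V$ unambiguous against the standard references, and you may want to add a word to that effect, though the inequality you cite from Subsection \ref{volsubsec} is recorded there without any smoothness hypothesis on $V$.
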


\begin{proof}
By taking a suitable $V$-birational model and considering Proposition \ref{v-bir posvol loc}, we may assume that $V$ is smooth.
We can take a sequence of ample divisors $A_i$ such that $D+A_i$ are $\Q$-divisors and $A_i\to 0$ as $i\to \infty $.
Since $V\subseteq X$ is a positive volume subvariety of $D$, there exists a constant $C_0$ such that for any large integer $k>0$, we have
$$
\limsup_{m\to\infty}\frac{h^0(X|V,m(D+A_k))}{m^{\kappanu(D)}}>C_0
$$
where $m$ is taken over all positive integers such that $m(D+A_k)$ is a $\Z$-divisor.
Note that $h^0(X|V,m(D+A_k))\leq h^0(V,m(D|_V+A_k|_V))$.
Thus for any large integer $k>0$, we also have
$$
\limsup_{m\to\infty}\frac{h^0(V,m(D|_V+A_k|_V))}{m^{\kappanu(D)}}> C_0.
$$
By the continuity of the volume function $\vol_V$, we obtain $\vol_V (D|_V)>0$. Thus $D|_V$ is big on $V$.
\end{proof}

\section{Okounkov body of a pseudoeffective divisor}\label{oksec}

In this section, we introduce and study the valuative Okounkov body $\okval_{Y_\bullet}(D)$ and the limiting Okounkov body $\oklim_{Y_\bullet}(D)$ associated to a divisor $D$ on a variety $X$.

\subsection{Okounkov body $\okbd_{Y_\bullet}(D)$}\label{ok of LM}\hfill

Let $X$ be a  (possibly singular) projective variety of dimension $n$, and $D$ be an $\R$-divisor on $X$ such that the $\R$-linear system $|D|_{\R}$ is nonempty.
In Introduction, we have defined the function $\nu_{Y_\bullet}: |D|_{\R}\to \R^n$ for a given admissible flag $Y_\bullet$ on $X$.
We also recall that the \emph{Okounkov body} $\okbd_{Y_\bullet}(D)$ of a divisor $D$ with respect to an admissible flag $Y_\bullet$ is defined as
$$
\okbd_{Y_\bullet}(D) := \text{the closure of the convex hull of the image $\nu_{Y_\bullet}(|D|_\R)$ in $\R^n_{\geq 0}$}.
$$
We remark that such construction of $\okbd_{Y_\bullet}(D)$ is equivalent to the one given in \cite{lm-nobody} where $\okbd_{Y_\bullet}(D)$ is defined as the natural extension of the Okounkov body defined for $\Z$-divisors using the homogeneity and the continuity of the Okounkov bodies.
See \cite{lm-nobody} for details.

In this Subsection, we also recall  from \cite{lm-nobody} the construction of the Okounkov body of  a graded linear series $W_{\bullet}$ associated to $D$.

Let $D$ be a Cartier $\Z$-divisor on $X$ such that $H^0(X,\mc O_X(D))\neq 0$, and fix an admissible flag $Y_\bullet$ on $X$.
For a non-zero section $s \in H^0(X, \mathcal{O}_X(D)) \setminus \{ 0 \}$, we can find an effective divisor $D'=\text{div}(s) \sim D$.
Thus we can also define the following function
$$
\nu_{Y_\bullet}:  H^0(X, \mathcal{O}_X(D)) \setminus \{ 0 \} \to \Z^n_{\geq 0},\;\;\nu_{Y_\bullet}(s):=\nu_{Y_\bullet}(D').
$$
Recall that a \emph{graded linear series} $W_{\bullet}(D)=\{ W_m:=W_m(D) \}_{m \geq 0}$ associated to $D$ consists of subspaces $W_m \subseteq H^0(X,\mathcal{O}_X(mD))$ with $W_0=\C$
 satisfying $W_m \cdot W_l \subseteq W_{m+l}$ for all $m,l \geq 0$.
 The \emph{volume} of $W_\bullet$ is defined as
$$
\vol_X(W_{\bullet}) := \lim_{m \to \infty} \frac{\dim W_m}{m^n/n!}
$$
Note that the above limit exists by \cite[Theorem 2.13 and Remark 2.14]{lm-nobody}.
We define the \emph{graded semigroup} of $W_{\bullet}$ as
$$
\Gamma(W_{\bullet}):=\{(\nu(s),m) \mid  \; W_m\neq \{0\} \text{ and } s\in W_m\setminus\{0\} \;\} \subseteq \mathbb Z_{\geq0}^n\times \mathbb Z_{\geq0}.
$$

\begin{definition}\label{def-conbd}
Let $W_{\bullet}$ be a graded linear series associated to a Cartier $\Z$-divisor $D$ on a variety $X$ of dimension $n$.
We can associate the convex body $\Delta_{Y_\bullet}(W_{\bullet})$
called the \emph{Okounkov body} of a graded linear series $W_{\bullet}$ with respect to an admissible flag $Y_\bullet$ on $X$ as follows:
$$
\Delta_{Y_\bullet}(W_\bullet):= \Sigma(\Gamma(W_{\bullet})) \cap (\R_{\geq0}^n \times \{ 1 \} ) \subseteq \R^n_{\geq 0} \times \{ 1 \} = \R^n_{\geq 0}
$$
where $\Sigma(\Gamma(W_{\bullet}))$ denotes the closure of the convex cone in $\R_{\geq0}^n\times \R_{\geq0}$ spanned by $\Gamma(W_{\bullet})$.
\end{definition}

Note also that $\Delta_{Y_\bullet}(W_\bullet)$ is a convex subset in $\R^n$, that is, $\Delta_{Y_\bullet}(W_\bullet)\subseteq\R^n_{\geq 0}$.
If $W_\bullet$ is a complete graded linear series (that is, $W_m=H^0(X,\mc O(mD))$ for each $m$) of a Cartier $\Z$-divisor $D$, then we write $W_\bullet=W_\bullet(D)$ and we have
$$
\okbd_{Y_\bullet}(D)=\okbd_{Y_\bullet}(W_\bullet(D)).
$$

By construction, the Okounkov body $\okbd_{Y_\bullet}(D)$ of a divisor $D$ or the Okounkov body $\okbd_{Y_\bullet}(W_\bullet)$ of a graded linear series $W_\bullet$ depends on the choice of the admissible flag $Y_\bullet$.

\begin{remark}\label{remk-partial flag}
Let $Y_\bullet$ be an admissible flag on a variety $X$ of dimension $n$.
For each integer $k$ such that $0 \leq k  \leq n$, we can define the $k$-th \emph{partial flag} $Y_{k\bullet}$ of $Y_\bullet$ as
$$
Y_{k\bullet}: Y_{k} \supseteq Y_{k+1}\supseteq \cdots\supseteq Y_n=\{x\}.
$$
Then $Y_{k\bullet}$ is an admissible flag on a projective variety $Y_{k}$ of dimension $n-k$.
If $D$ is an effective divisor on $X$ such that $\ord_{Y_k}(D)=0$, then we obtain an effective divisor $D|_{Y_k}$ on $Y_k$. As above, we can define
$$
\nu_{Y_{k\bullet}}(D) := (\nu_{k+1}(D), \cdots, \nu_n(D)) \in \R^{n-k}_{\geq 0}.
$$
If $\ord_{Y_k}(D) > 0$, then we set $\nu_{Y_{k\bullet}}(D)=(0, \cdots, 0) \in \R^{n-k}_{\geq 0}$.
The \emph{$k$-th partial Okounkov body} of a divisor $D$ on $X$ with respect to the $k$-th partial flag $Y_{k\bullet}$ is defined as
$$
\okbd_{Y_{k\bullet}}(D):= \text{the closure of the convex hull of $\nu_{Y_{k\bullet}}(|D|_{\R})$ in $\R^{n-k}_{\geq 0}$.}
$$
These partial flags and partial Okounkov bodies will be useful in the next subsections.
\end{remark}

Under the following conditions, the Okounkov body $\Delta_{Y_\bullet}(W_\bullet)$ behaves well.

\begin{definition}[{\cite[Definitions 2.5 and 2.9]{lm-nobody}}]\label{condition CD}\hfill

\begin{enumerate}[leftmargin=0.3cm,itemindent=0.5cm]
\item We say that a graded linear series $W_{\bullet}$ satisfies \emph{Condition} (B) if $W_m \neq 0$ for all $m \gg 0$, and for all sufficiently large $m$, the rational map $\varphi_m : X \dashrightarrow \P(W_m)$ defined by $|W_m|$ is birational onto its image.
\item We say that a graded linear series $W_{\bullet}$ satisfies \emph{Condition} (C) if
\begin{enumerate}
 \item[i.] for every $m \gg 0$, there exists an effective divisor $F_m$ such that the divisor $A_m := mD-F_m$ is ample, and
 \item[ii.] for all sufficiently large $p$, we have
$$
H^0(X, \mc{O}_X(pA_m)) \subseteq W_{pm} \subseteq H^0(X,\mc{O}_X(pmD)).
$$
\end{enumerate}
\end{enumerate}
\end{definition}

If $W_{\bullet}$ is complete, that is, $W_m = H^0(X, \mathcal{O}_X(mD))$ for all $m \geq 0$ and $D$ is big, then it automatically satisfies Condition (C).

\begin{theorem}[{\cite[Theorem 2.13]{lm-nobody}}]\label{thrm-okbd of big}
Suppose that a graded linear series $W_\bullet$ satisfies Condition (B) or Condition (C).
Then for any admissible flag $Y_\bullet$ (with a general choice of the point $Y_n=\{x\}$ when $W_\bullet$ only satisfies Condition (B)), we have
$$
\dim \Delta_{Y_\bullet}(W_{\bullet})=\dim X (=n) \text{\;\; and \;\;}  \vol_{\R^n}(\Delta_{Y_\bullet}(W_{\bullet}))=\frac{1}{n!}\cdot\vol_X (W_{\bullet}).
$$
\end{theorem}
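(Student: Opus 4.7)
The strategy is the standard Okounkov--Kaveh--Khovanskii approach: translate the volume and dimension statements into asymptotics of lattice-point counts in the graded semigroup $\Gamma(W_\bullet) \subseteq \Z_{\geq 0}^n \times \Z_{\geq 0}$, and then invoke a general convex-geometric theorem for semigroups.

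The first step is to establish the identity
$$
\dim_\C W_m \;=\; \#\bigl(\Gamma(W_\bullet) \cap (\Z^n \times \{m\})\bigr)
$$
for every $m$. This rests on the fact that the flag $Y_\bullet$ defines a rank-$n$ valuation on $K(X)$ with residue field $\C$: at each successive step of the construction $s \mapsto s_i = s'_i|_{Y_i}$ loses at most one dimension per value of $\nu_i$, so any finite-dimensional $\C$-subspace $V \subseteq K(X)$ satisfies $\dim_\C V = \#\nu_{Y_\bullet}(V\setminus\{0\})$. Applied to $V = W_m$, this shows that $\vol_X(W_\bullet)$ is determined by the asymptotic growth of $\#(\Gamma(W_\bullet) \cap (\Z^n \times \{m\}))$.

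The second step is to invoke the general lattice-point theorem for graded semigroups (the Okounkov--Khovanskii lemma, as formulated in \cite[Proposition 2.1]{lm-nobody} or \cite{KK}): if $\Gamma \subseteq \Z_{\geq 0}^n \times \Z_{\geq 0}$ generates a subgroup of full rank $n+1$ in $\Z^{n+1}$ and its slices grow at most polynomially of degree $n$, then
$$
\lim_{m \to \infty} \frac{\#\bigl(\Gamma \cap (\Z^n \times \{m\})\bigr)}{m^n} \;=\; \vol_{\R^n}\bigl(\Delta_{Y_\bullet}(W_\bullet)\bigr).
$$
Combined with Step~1 and the definition of $\vol_X(W_\bullet)$, this yields the claimed volume identity, and $\dim \Delta_{Y_\bullet}(W_\bullet) = n$ follows from full rank.

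The key remaining task, and the main obstacle, is to verify the full-rank hypothesis under either of Conditions~(B) and~(C). Under~(C), the inclusion $H^0(X, \mc O_X(pA_m)) \subseteq W_{pm}$ places a dilate of the Okounkov body of the ample divisor $A_m$ inside $\Delta_{Y_\bullet}(W_\bullet)$; since an ample divisor becomes very ample after finite multiple and separates jets along the flag, one exhibits $n+1$ sections whose $\nu_{Y_\bullet}$-vectors span a rank-$(n+1)$ sublattice of $\Z^{n+1}$, so $\Gamma(W_\bullet)$ inherits full rank. Under~(B), I would exploit the birationality of $\varphi_m : X \dashrightarrow \P(W_m)$ at the general point $x = Y_n$: on the Zariski open where $\varphi_m$ is an isomorphism onto its image, $W_m$ separates jets of arbitrary order at $x$, so one can produce sections realizing each standard basis direction of $\Z^n$ under $\nu_{Y_\bullet}$. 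The genericity of $x$ is essential here, since at a special point $\varphi_m$ might collapse the flag direction and fail to produce the needed sections. Once the rank condition is in hand, Step~2 closes the proof automatically.
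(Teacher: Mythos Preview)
The paper does not supply its own proof of this statement: it is quoted directly from \cite[Theorem 2.13]{lm-nobody}, with only the remark that Condition~(A) of \cite{lm-nobody} is automatic for projective $X$. Your proposal is essentially a faithful reconstruction of the Lazarsfeld--Musta\c{t}\u{a} argument, so in that sense it agrees with what the paper (by reference) relies on.

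One small imprecision worth flagging: under Condition~(B) you say that on the open set where $\varphi_m$ is an isomorphism onto its image, $W_m$ separates jets of \emph{arbitrary} order at $x$. Birationality only gives you first-order information at a general point---local coordinates $s_1/s_0,\ldots,s_n/s_0$ coming from sections $s_0,\ldots,s_n\in W_m$. What one then uses is that monomials in these sections lie in $W_{km}$ for all $k\geq 1$ (by the graded multiplication $W_m\cdot W_m\subseteq W_{2m}$), and it is these products that realize the standard basis directions and hence force $\Gamma(W_\bullet)$ to generate a full-rank subgroup. This is how \cite[Lemma~2.6]{lm-nobody} proceeds. Your sketch would go through once this is made precise; as written, the phrase ``arbitrary order'' overstates what a single $W_m$ provides.

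You also implicitly need the boundedness hypothesis (condition~(2.5) in \cite{lm-nobody}) for the lattice-point theorem, not just full rank; this is exactly what Condition~(A) supplies and what the paper notes is automatic in the projective setting, so it is harmless here but should be mentioned for completeness.
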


Note that \cite[Theorem 2.13]{lm-nobody} also requires Condition (A) (\cite[Definition 2.4]{lm-nobody}),
but it is automatically satisfied in our situation since we always assume that our variety $X$ is projective. The smoothness assumption on $X$ is not necessary for this theorem.

\begin{remark}\label{ex-okbd not num}
It is well known by \cite[Proposition 4.1]{lm-nobody} that for a fixed admissible flag $Y_\bullet$ on $X$,
if $D$ is a big divisor on $X$, then $\okbd_{Y_\bullet}(D)$ depends only on the numerical class of $D$.
If $D$ is not big, then it is no longer true.
See Remark \ref{ex-okval-not num}.
\end{remark}


We will use the following result, which generalizes \cite[Theorem 4.26]{lm-nobody}.
Many of our subsequent results will rely on this nontrivial theorem.
The detailed proof is given in \cite{CPW}.

\begin{theorem}[{\cite[Theorem 1.1]{CPW}}]\label{thrm-okbd slice}
Let $D$ be a big divisor on a smooth projective variety $X$ of dimension $n$.
Suppose that $Y_\bullet$ is an admissible flag on $X$ such that $Y_{k} \not\subseteq \bp(D)$. Then we have
$$\okbd_{Y_{\bullet}}(D) \cap (\{0 \}^k \times \R^{n-k}_{\geq 0})=\okbd_{Y_{k\bullet}}(D).$$
\end{theorem}

\begin{remark}
If $Y_k \subseteq \bp(D)$, then the conclusion of Theorem \ref{thrm-okbd slice} does not hold in general. For example, following \cite[Example 5.10]{elmnp-restricted vol and base loci}, we consider the blow-up $\pi : X \to \P^3$ of $\P^3$ at a line with exceptional divisor $E$. Let $D \in |\pi^*\mathcal{O}_{\P^3}(1)|$ be an effective divisor, and $C$ be a smooth curve of type $(2,1)$ in $E \simeq \P^1 \times \P^1$.
Note that $D$ is nef and big and $C \subseteq \bp(D)=E$.
Fix an admissible flag
$$
Y_\bullet : X=Y_0 \supseteq E=Y_1 \supseteq C=Y_2 \supseteq Y_3=\{ x \}
$$
on $X$ where $x$ is any point on $C$. We may regard both $\okbd_{Y_{\bullet}}(D) \cap (\{0 \}^2 \times \R^{1}_{\geq 0})$ and $\okbd_{Y_{2\bullet}}(D)$ as convex subsets in $\R^1_{\geq 0}$. We then have
$$
\okbd_{Y_{\bullet}}(D) \cap (\{0 \}^2 \times \R^{1}_{\geq 0}) = \{ x \in \R^1_{\geq 0} \mid 0 \leq x \leq 2 \} \supsetneq
 \{ x \in \R^1_{\geq 0} \mid 0 \leq x \leq 1 \} = \okbd_{Y_{2\bullet}}(D).
$$
\end{remark}

\subsection{Valuative Okounkov body $\okval_{Y_\bullet}(D)$}\label{okbdnaksubsec}\hfill

Throughout the subsection, $X$ is a smooth projective variety of dimension $n$.

\begin{definition}\label{okval}
Let $D$ be a divisor on a variety $X$ of dimension $n$ such that $|D|_{\R} \neq \emptyset$.
The \emph{valuative Okounkov body} $\okval_{Y_\bullet}(D)$ associated to $D$ with respect to an admissible flag $Y_\bullet$ on $X$ is defined as
$$\okval_{Y_\bullet}(D):=\Delta_{Y_\bullet}(D)\subseteq\R^n_{\geq 0}.$$
For a divisor $D$ with $|D|_{\R} = \emptyset$, we define $\okval_{Y_\bullet}(D):=\emptyset$.
\end{definition}

\begin{remark}
Note that the constructions for the valuative Okounkov body $\okval_{Y_\bullet}(D)$ and for the usual Okounkov body $\okbd_{Y_\bullet}(D)$ are the same. However, for a non-big divisor $D$, we will use the notation $\okval_{Y_\bullet}(D)$ in order to emphasize special properties of valuative Okounkov body  $\okval_{Y_\bullet}(D)$.
\end{remark}

Clearly,  the valuative Okounkov body $\okval_{Y_\bullet}(D)$ of a divisor $D$ depends on the choice of the admissible flag $Y_\bullet$.
However, we will see below that if $\kappa(D) \geq 0$ and $Y_\bullet$ contains a Nakayama subvariety $U=Y_{n-\kappa(D)}$ of $D$, then the valuative Okounkov body $\okval_{Y_\bullet}(D)$ depends only on the choice of a Nakayama subvariety of $D$ and the $(n-\kappa(D))$-th partial flag $Y_{n-\kappa(D)\bullet}$ on $U$.

Let $D$ be a divisor on $X$ with $\kappa(D) \geq 0$, and fix an admissible flag $Y_\bullet$ on $X$ containing a Nakayama subvariety $U=Y_{n-\kappa(D)}$ of $D$. As in Remark \ref{remk-partial flag}, we can also consider $\nu_{Y_{n-\kappa(D)\bullet}}(D') \in \R^{\kappa(D)}_{\geq 0}$ for any $D' \in |D|_{\R}$.

\begin{proposition}\label{okval=0 in nu}
Under the notations as above, for any $D' \in |D|_{\R}$, we have
$$
\nu_{Y_\bullet}(D')=(\underbrace{0, \ldots, 0}_{n-\kappa(D)}, \nu_{Y_{n-\kappa(D)\bullet}}(D')).
$$
In particular, $\okval_{Y_\bullet}(D) \subseteq \{0\}^{n-\kappa(D)} \times \R^{\kappa(D)}_{\geq 0}$, and so we can regard $\okval_{Y_\bullet}(D)$ as a subset of $\R^{\kappa(D)}_{\geq 0}$.
\end{proposition}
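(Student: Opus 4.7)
The plan is to prove the first assertion by induction on $i$, showing that $\nu_i(s) = 0$ for $i = 1, \ldots, n-\kappa(D)$, and that the $(n-\kappa(D))$-th successive restriction equals $\varphi_U(s)$. The second assertion then follows from the definition of the Okounkov body as a closure of a convex hull.

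Set $U = Y_{n-\kappa(D)}$, and recall from the construction that $\nu_i(s) = \ord_{Y_i}(s_{i-1})$, where $s_0 := s$ and $s_i$ is obtained from $s_{i-1}$ by extracting the local equation of $Y_i$ to the $\nu_i$-th power and restricting to $Y_i$. The central observation is that the Nakayama property (Definition \ref{def-nakayama locus}) says that no nonzero global section of $\mathcal{O}_X(mD)$ can vanish on all of $U$, since this is exactly the statement $H^0(X, \mathcal{I}_U \otimes \mathcal{O}_X(\lfloor mD \rfloor)) = 0$.

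For the inductive step, assume $\nu_1(s) = \cdots = \nu_{i-1}(s) = 0$ for some $i \leq n-\kappa(D)$. Then no dividing by local equations occurred, so $s_{i-1}$ is literally the iterated restriction $s|_{Y_{i-1}}$, and it is nonzero because $U \subseteq Y_{i-1}$ and the restriction $s|_U = \varphi_U(s)$ is nonzero. If $\nu_i(s) > 0$, then $s_{i-1}$ would vanish along $Y_i$; but $U \subseteq Y_i$, so we would conclude $s|_U = 0$, contradicting the Nakayama property. Hence $\nu_i(s) = 0$, completing the induction. In particular, $s_{n-\kappa(D)} = s|_U = \varphi_U(s)$, and by construction the remaining coordinates $(\nu_{n-\kappa(D)+1}(s), \ldots, \nu_n(s))$ are exactly the coordinates of $\nu_{Y_{n-\kappa(D)\bullet}}(\varphi_U(s))$ computed with respect to the partial flag on $U$ (see Remark \ref{remk-partial flag}).

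The main obstacle is simply bookkeeping: one must be careful that once all $\nu_j$ with $j < i$ vanish, the section $s_{i-1}$ really is the naive iterated restriction (so that the Nakayama injectivity hypothesis applies directly to it), and that the restriction maps $H^0(Y_{j-1}, \mathcal{O}(mD|_{Y_{j-1}})) \to H^0(Y_j, \mathcal{O}(mD|_{Y_j}))$ at intermediate levels are compatible with the ones landing in $H^0(U, \mathcal{O}_U(mD|_U))$. This is immediate from the chain of inclusions $U \subseteq Y_j \subseteq Y_{j-1}$. For the final sentence of the statement, note that the graded semigroup $\Gamma(W_\bullet(D))$ is now contained in $\{0\}^{n-\kappa(D)} \times \mathbb{Z}^{\kappa(D)} \times \mathbb{Z}_{\geq 0}$, so its closed convex cone and the resulting slice at height one both lie in $\{0\}^{n-\kappa(D)} \times \mathbb{R}^{\kappa(D)}$, allowing us to identify $\okval_{Y_\bullet}(D)$ with a subset of $\mathbb{R}^{\kappa(D)}$ via the obvious projection.
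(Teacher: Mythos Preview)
Your proof is correct and follows essentially the same approach as the paper: both use the Nakayama vanishing $H^0(X,\mathcal{I}_U\otimes\mathcal{O}_X(mD))=0$ together with the inclusions $U\subseteq Y_k$ to force $\nu_k(s)=0$ for $k\le n-\kappa(D)$. The paper's version is terser, deducing $H^0(X,\mathcal{I}_{Y_k}\otimes\mathcal{O}_X(mD))=0$ directly from $Y_k\supseteq U$ and leaving the inductive bookkeeping implicit, whereas you spell out explicitly why $s_{i-1}$ is the naive restriction once the earlier $\nu_j$ vanish.
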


\begin{proof}
By Lemma \ref{lem-same nakayama loci}, $U=Y_{n-\kappa(D)}$ is also a Nakayama subvariety of $D'$.
By the definition of Nakayama subvariety, we have
$$H^0(X,\mathcal{I}_{U} \otimes \mathcal{O}_X(\lfloor mD' \rfloor))=0$$
for all $m \geq 0$. This implies that $U \not\subseteq \text{Supp}(D')$.
Since $Y_k\supseteq U$ for $1 \leq k \leq n-\kappa(D)$, it follows that $Y_k \not\subseteq \text{Supp}(D')$. Thus we obtain $\nu_k(D')=0$ for $1 \leq k \leq n-\kappa(D)$, and the assertions immediately follow.
\end{proof}

Now let $D$ be a $\Z$-divisor on $X$ such that $\kappa(D)\geq 0$, and fix a Nakayama subvariety $U\subseteq X$ of $D$. By Theorem \ref{lem-D big on naka}, $D|_U$ is a big divisor on $U$.
Consider the \emph{restricted complete graded linear series} $W_\bullet(D|U)$ of a $\Z$-divisor $D$ along $U$ which is given by
$$
W_m(D|U):=H^0(X|U,mD)
$$
for each $m \geq 0$.
Recall that $H^0(X|U,mD)$ is the image of the natural restriction map $\varphi_U:H^0(X,\mathcal{O}_X(mD)) \to H^0(U,\mathcal{O}_U(mD|_U))$.
Note that $W_\bullet(D|U)$ is a graded linear subseries of $W_\bullet(D|_U)$ on $U$.


\begin{remark}\label{remk-another okval}
Let $Y_\bullet$ be an admissible flag containing a Nakayama subvariety $U$ of a $\Z$-divisor $D$ on $X$ such that $\kappa(D)\geq 0$.
Then the valuative Okounkov body $\okval_{Y_\bullet}(D)$ can be constructed alternatively as follows.
Consider the  $(n-\kappa(D)$)-th partial flag  $Y_{n-\kappa(D)\bullet}$ on $U$.
Since $W_\bullet(D|U)$ is a graded linear subseries associated to $D|_U$ on $U$,
we can define the Okounkov body
$$
\Delta_{Y_{n-\kappa(D)\bullet}}(W_\bullet(D|U)) \subseteq \R^{\kappa(D)}_{\geq 0}
$$
as in Definition \ref{def-conbd}.
By regarding  $\okval_{Y_\bullet}(D)$ as a subset of $\R^{\kappa(D)}$ (Proposition \ref{okval=0 in nu}), we have
$$
\okval_{Y_\bullet}(D) =\Delta_{Y_{n-\kappa(D)\bullet}}(W_\bullet(D|U)).
$$
Thus $\Delta_{Y_{n-\kappa(D)\bullet}}(W_\bullet(D|U))$ gives an alternative construction of $\okval_{Y_\bullet}(D)$.
\end{remark}

The following is the main property of $\okval_{Y_\bullet}(D)$.

\begin{theorem}\label{okval-main}
Let $D$ be a divisor such that $\kappa(D)\geq 0$ on a smooth projective variety $X$, and fix an admissible flag $Y_\bullet$ containing a Nakayama subvariety $U$ of $D$ such that $Y_n=\{x\}$ is a general point. Then $\okval_{Y_\bullet}(D) \subseteq \{ 0 \}^{n-\kappa(D)} \times \R^{\kappa(D)}$. Furthermore, we have
$$
\dim \okval_{Y_\bullet}(D)=\kappa(D) \text{\;\; and \;\;} \vol_{\R^{\kappa(D)}}(\okval_{Y_\bullet}(D))=\frac{1}{\kappa(D)!}\vol_{X|U}(D)
$$
where $\okval_{Y_\bullet}(D)$ is regarded as a convex subset of $\R^{\kappa(D)}$.
\end{theorem}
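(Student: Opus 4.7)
The plan is to reduce the computation to Theorem \ref{thrm-okbd of big} applied on the Nakayama subvariety $U$. By Remark \ref{remk-another okval}, once we identify $\okval_{Y_\bullet}(D)$ with a subset of $\R^{\kappa(D)}$ via Proposition \ref{okval=0 in nu}, it coincides with the Okounkov body $\okbd_{Y_{n-\kappa(D)\bullet}}(W_\bullet(D|U))$ of the restricted complete graded linear series $W_m(D|U):=H^0(X|U,\lfloor mD\rfloor)$ on $U$, computed with respect to the partial admissible flag $Y_{n-\kappa(D)\bullet}$ on the $\kappa(D)$-dimensional smooth variety $U$. Thus it suffices to compute the dimension and Euclidean volume of this Okounkov body on $U$.

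To do so, I would apply Theorem \ref{thrm-okbd of big} to $W_\bullet(D|U)$ on $U$ by verifying Condition (B) of Definition \ref{condition CD}. The non-vanishing $W_m(D|U)\neq 0$ for $m\gg 0$ is immediate from $\kappa(D)\geq 0$ combined with the Nakayama injectivity $W_m(D|U)\cong H^0(X,\lfloor mD\rfloor)$. For the birationality of $\varphi_m:U\dashrightarrow\P(W_m(D|U))$, the same identification shows that $\varphi_m$ is the restriction $\Phi_{mD}|_U$ of the Iitaka map to $U$. Since for $m\gg 0$ the image of $\Phi_{mD}$ has dimension $\kappa(D)=\dim U$, the map $\varphi_m$ is generically finite onto its image; the generality of the point $Y_n=\{x\}\in U$, combined with the bigness of $D|_U$ on $U$ (Lemma \ref{lem-D big on naka}), is then used to conclude the birationality. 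Theorem \ref{thrm-okbd of big} thereby yields
\[
\dim\okbd_{Y_{n-\kappa(D)\bullet}}(W_\bullet(D|U))=\kappa(D)\quad\text{and}\quad \vol_{\R^{\kappa(D)}}\!\bigl(\okbd_{Y_{n-\kappa(D)\bullet}}(W_\bullet(D|U))\bigr)=\frac{1}{\kappa(D)!}\vol_U(W_\bullet(D|U)).
\]

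The last step is routine: by Nakayama injectivity, $\dim W_m(D|U)=h^0(X|U,\lfloor mD\rfloor)$, and directly from the defining formulas,
\[
\vol_U(W_\bullet(D|U))=\lim_{m\to\infty}\frac{\dim W_m(D|U)}{m^{\kappa(D)}/\kappa(D)!}=\lim_{m\to\infty}\frac{h^0(X|U,\lfloor mD\rfloor)}{m^{\kappa(D)}/\kappa(D)!}=\vol_{X|U}(D),
\]
which gives the claimed formula. The main obstacle is the birationality step in the verification of Condition (B): while the complete graded linear series $W_\bullet(D|_U)$ on $U$ trivially satisfies the hypotheses thanks to the bigness of $D|_U$, the restricted series $W_\bullet(D|U)$ is strictly smaller, so one must carefully use the generality of $x\in U$ and the Nakayama property to ensure that the sections extending from $X$ separate sufficiently near $x$ to render the map defined by $|W_m(D|U)|$ birational onto its image (equivalently, to force the associated graded semigroup $\Gamma(W_\bullet(D|U))\subseteq\Z^{\kappa(D)}\times\Z_{\geq 0}$ to generate a full-rank subgroup of index one) for $m\gg 0$.
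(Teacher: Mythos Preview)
Your proposal follows essentially the same route as the paper: reduce via Remark~\ref{remk-another okval} to the Okounkov body of the restricted series $W_\bullet(D|U)$ on the $\kappa(D)$-dimensional variety $U$, then invoke Theorem~\ref{thrm-okbd of big}. The only difference is in how the hypotheses of Theorem~\ref{thrm-okbd of big} are discharged. You try to verify Condition~(B) directly and correctly flag the birationality of $\varphi_m:U\dashrightarrow\P(W_m(D|U))$ as the sticking point---indeed, for an arbitrary Nakayama subvariety $U$ the restriction of the Iitaka map need only be generically finite, not birational, so this step as written is not a proof. The paper sidesteps this entirely: rather than establishing Condition~(B), it uses the Nakayama injectivity to get the growth $\dim W_m\sim m^{\kappa(D)}$ and then appeals to \cite[Remark~2.8]{lm-nobody}, which says that this growth condition together with a general choice of the flag point $x$ already suffices for the conclusion of Theorem~\ref{thrm-okbd of big} (equivalently, it forces the semigroup to generate a full lattice). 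So the obstacle you identify is real for a direct verification of Condition~(B), but it dissolves once you cite the right remark from \cite{lm-nobody}.
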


\begin{proof}
The first part of the assertions is shown in Proposition \ref{okval=0 in nu}. We now prove the second part. We first consider the case where $D$ is a $\Z$-divisor.
We can easily check that $\okval_{Y_\bullet}(mD)=\frac{1}{m}\okval_{Y_\bullet}(D)$ for any integer $m>0$. Thus we may assume that the constant $m_0$ in Remark \ref{remk-kappa=m^k} is equal to $1$. In particular, $h^0(X, \mathcal{O}_X(mD)) >0$ for any integer $m>0$.
By Remark \ref{remk-another okval}, we have
$\okval_{Y_\bullet}(D) =\Delta_{Y_{n-\kappa(D)\bullet}}(W_\bullet(D|U))$.
By the properties of a Nakayama subvariety, there exits an integer $m_0$ such that $h^0(X,\mathcal{O}_X(mm_0D))=\dim W_{mm_0} \sim m^{\kappa(D)}$ for all $m \gg 0$
(cf. Remark \ref{remk-kappa=m^k}) and $\dim U=\kappa(D)$.
Furthermore, $Y_n=\{x\}$ is assumed to be general. By \cite[Remark 2.8]{lm-nobody}, the assertions follow from Theorem \ref{thrm-okbd of big}.

Now, we assume that $D$ is a $\Q$-divisor. There exists a sufficiently divisible integer $m>0$ such that $mD$ is a $\Z$-divisor. We can also see that $\okval_{Y_\bullet}(D)=\frac{1}{m}\okval_{Y_\bullet}(mD)$. Thus the assertions for $\Q$-divisors follow from the $\Z$-divisor case.

Finally, we treat the case where $D$ is an $\R$-divisor. Consider a sequence of $\Q$-divisors $D_m:=\frac{\lfloor mD \rfloor}{m}$ converging to $D$ as $m \to \infty$. Then $E_m:=D-D_m=\frac{mD - \lfloor mD \rfloor}{m}$ forms a sequence of effective $\R$-divisors converging to $0$ as $m \to \infty$.
We have
$$
\okval_{Y_\bullet}(D_m) + \okval_{Y_\bullet}(E_m) \subseteq \okval_{Y_\bullet}(D) ~~\text{ and }~~ \lim_{m \to \infty} \okval_{Y_\bullet}(E_m) = \{ \text{the origin of $\R^{\kappa(D)}$} \}
$$
as convex subsets in $\R^{\kappa(D)}_{\geq 0}$.
Take any $D' \in |D|_{\R}$. For any sufficiently small number $\eps > 0$, by applying Lemma \ref{r-divisor}, we can find $D_m' \in |D_m|_{\Q} := \{D_m' \mid D_m \sim_{\Q} D_m' \geq 0 \}$ for a sufficiently large and divisible integer $m>0$  such that
$$
|\nu_i - \nu_i'| < \eps~~ \text{ for all $i$ }
$$
where $\nu_{Y_\bullet}(D')=(\nu_1, \cdots, \nu_n)$ and $\nu_{Y_\bullet}(D_m')=(\nu_1', \cdots, \nu_n')$.
Observe that if $m \mid m'$, then $D_m \leq D_{m'}$.
Since $m>0$ is sufficiently large, $D_{m'}-D_m = E_m - E_{m'}$ is arbitrarily small.
Thus for a sufficiently large and divisible integer $m'>0$ with $m \mid m'$, we can also find $D_{m'}'  \in |D_{m'}|_{\Q}$ such that
$$
|\nu_i - \nu_i''| < \eps  ~~\text{ for all $i$ }
$$
where  $\nu_{Y_\bullet}(D_{m'}')=(\nu_1'', \cdots, \nu_n'')$.
Therefore, by considering  sufficiently divisible integers $m>0$, we may assume that
$$
\lim_{m \to \infty} \okval_{Y_\bullet}(D_{m}) = \okval_{Y_\bullet}(D).
$$
On the other hand, by definition of the Iitaka dimension, we have $\kappa(D_m)=\kappa(D)$ for a sufficiently large $m>0$.
By definition of the restricted volume, we have
$$
\vol_{X|U}(D)=\lim_{m \to \infty} \vol_{X|U}(D_{m}).
$$
Note that $U$ is a Nakayama subvariety of $D_m$ for a sufficiently large $m>0$.
Thus the assertions for $\R$-divisors follow from the $\Q$-divisor case.
\end{proof}

\begin{remark}\label{ex-okval-not num}
It is natural to ask whether $\okval_{Y_\bullet}(D)$ is a numerical invariant of $D$ or not.
If $D$ is a big divisor on $X$, then by Remark \ref{ex-okbd not num}, $\okval_{Y_\bullet}(D)=\okbd_{Y_\bullet}(D)$ depends only on the numerical class of $D$.
However, in general, we may have $\kappa(D)<\kappa(D')$ even if  $D \equiv D'$ (see \cite[Example 6.1]{lehmann-nu}).
Thus by choosing an admissible flag $Y_\bullet$ containing Nakayama subvarieties of $D$ and $D'$, we obtain $\okval_{Y_\bullet}(D)\neq\okval_{Y_\bullet}(D')$.
\end{remark}

\begin{remark}\label{effuptoR}
If $D$ is only effective up to $\R$-linear equivalence, then Theorem \ref{okval-main} does not hold in general. As in Remark \ref{remk-kappa not num}, take two distinct points $P,Q$ on $\P^1$ and an irrational number $a$. Set $D:=a(P-Q)$. Then $D \sim_{\R} 0$ and $\okval_{Y_\bullet}(D)=\{ 0 \in \R^1 \}$. However, we have $\kappa(D)=-\infty$.
\end{remark}

On the other hand, we have the following.

\begin{proposition}\label{okvallin}
If two divisors $D$, $D'$ satisfy $D \sim_\R D'$, then $\okval_{Y_\bullet}(D)=\okval_{Y_\bullet}(D')$ with respect to any admissible flag $Y_\bullet$.
\end{proposition}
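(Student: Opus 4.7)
The plan is to reduce the claimed equality to the set-theoretic identity $|D|_\R = |D'|_\R$ and invoke the intrinsic description of the Okounkov body given in the introduction.

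Recall that for any divisor $E$ on $X$ with $\kappa(E) \geq 0$, the Okounkov body $\okval_{Y_\bullet}(E) = \okbd_{Y_\bullet}(E)$ is described as the closed convex hull in $\R_{\geq 0}^n$ of the image $\nu_{Y_\bullet}(|E|_\R)$, where $|E|_\R = \{E'' \geq 0 \mid E'' \sim_\R E\}$ and $\nu_{Y_\bullet}$ is the valuation-like function computed inductively from the flag ($\nu_1(E'') = \ord_{Y_1}(E'')$, $\nu_2(E'') = \ord_{Y_2}((E'' - \nu_1(E'') Y_1)|_{Y_1})$, and so on). The key point is that $\nu_{Y_\bullet}(E'')$ depends only on the effective $\R$-divisor $E''$ itself and is insensitive to the choice of an $\R$-linear equivalence class containing it.

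Granting this description, the argument is purely formal. Transitivity of $\R$-linear equivalence forces $|D|_\R = |D'|_\R$ as sets of effective $\R$-divisors: any effective $E'' \geq 0$ with $E'' \sim_\R D$ automatically satisfies $E'' \sim_\R D'$ (since $D \sim_\R D'$), and conversely. Applying $\nu_{Y_\bullet}$ pointwise yields the equality $\nu_{Y_\bullet}(|D|_\R) = \nu_{Y_\bullet}(|D'|_\R)$ of subsets of $\R_{\geq 0}^n$, and passing to closed convex hulls then gives $\okval_{Y_\bullet}(D) = \okval_{Y_\bullet}(D')$ for the arbitrary admissible flag $Y_\bullet$.

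The main obstacle is verifying that the intrinsic $|E|_\R$-description above agrees with the graded-linear-series construction $\Delta_{Y_\bullet}(W_\bullet(E))$ of Definition \ref{def-conbd} that is formally used in Definition \ref{okval}. For big Cartier divisors the equivalence is classical, and as indicated in Subsection \ref{ok of LM} it extends to the present setting by \cite{AV-loc pos}; for $\Q$- and $\R$-divisors one also invokes the homogeneity $\okbd_{Y_\bullet}(kE) = k \cdot \okbd_{Y_\bullet}(E)$ together with continuity, much as in the proof of Theorem \ref{okval-main}. Once this equivalence is in place, the transitivity step above finishes the proof.
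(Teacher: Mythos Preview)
Your proposal is correct and is precisely the unpacking of what the paper deems trivial: the paper's own proof reads in its entirety ``It is trivial,'' and the obvious reason---that $|D|_\R = |D'|_\R$ as soon as $D\sim_\R D'$, so the closed convex hull of $\nu_{Y_\bullet}(|D|_\R)$ is unchanged---is exactly what you wrote out. Your added remark about reconciling the $|D|_\R$-description with the graded-linear-series construction is a fair caveat but not something the authors felt needed justification here.
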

\begin{proof}
It is trivial by definition.
\end{proof}

\begin{remark}\label{rem-okvallin}
The converse of Proposition \ref{okvallin} is false even if $D$ is big. Indeed, for two big divisors $D, D'$ such that $D \equiv D'$ but $D \not\sim_{\R} D'$, we have $\okbd_{Y_\bullet}(D)=\okbd_{Y_\bullet}(D')$ for any admissible flag $Y_\bullet$ (see \cite[Proposition 4.1]{lm-nobody}). Moreover, we can also construct such non-big effective divisors.
For example, consider a minimal ruled surface $f : S \to C$ over a smooth projective curve of genus $g \geq 1$. Let $F_1:=f^*P$ and $F_2:=f^*Q$ where $P$ and $Q$ are two distinct points on $C$. Note that $F_1 \not\sim_{\R} F_2$. However, we claim that $\okval_{Y_\bullet}(F_1)=\okval_{Y_\bullet}(F_2)$ for any admissible flag $Y_\bullet$ on $S$. To see this, observe that a curve $Y_1$ is either a fiber of $f$ or dominating $C$ via $f$. In the first case, we have
$$
\okval_{Y_\bullet}(F_1)=\okval_{Y_\bullet}(F_2)=\{(x_1, x_2) \in \R^2 \mid 0 \leq x_1 \leq 1, x_2=0 \}.
$$
In the second case, note that $Y_1$ is a Nakayama subvariety of both $F_1$ and $F_2$. Furthermore, we have
$$
\vol_{S|Y_1}(F_1)=Y_1 \cdot F_1 = Y_1 \cdot F_2=\vol_{S|Y_1}(F_2),
$$
and hence, the claim follows.
\end{remark}

\subsection{Limiting Okounkov body $\oklim_{Y_\bullet}(D)$}\label{oklimsubsec}\hfill

Throughout the subsection, $X$ is a smooth projective variety of dimension $n$.

\begin{definition}\label{def-oklim on Y}
Let $D$ be a pseudoeffective divisor $D$ on a variety $X$ of dimension $n$.
The \emph{limiting Okounkov body} $\oklim_{Y_\bullet}(D)$ of $D$ with respect to an admissible flag $Y_\bullet$ is defined as
$$
\oklim_{Y_\bullet}(D) := \lim_{\eps \to 0+} \okbd_{Y_\bullet}(D+ \eps A)  = \bigcap_{\eps > 0} \okbd_{Y_\bullet}(D + \eps A) \subseteq \R^n_{\geq 0}
$$
where $A$ is an ample divisor. If $D$ is not a pseudoeffective divisor, then we define $\oklim_{Y_\bullet}(D):=\emptyset$.
\end{definition}

\begin{remark}
It is easy to see that the definition of the limiting Okounkov body does not depend on the choice of an ample divisor $A$. If $D$ is big, then $\oklim_{Y_\bullet}(D)$ coincides with $\okbd_{Y_\bullet}(D)$ for any admissible flag $Y_\bullet$ on $X$.
\end{remark}

We are mainly interested in the limiting Okounkov bodies $\oklim_{Y_\bullet}(D)$ when $Y_\bullet$ contains
contains a positive volume subvariety $V$ of $D$.
To investigate such $\oklim_{Y_\bullet}(D)$ in detail, we introduce the following alternative construction, which also gives the same convex body as $\oklim_{Y_\bullet}(D)$ after all (see Proposition \ref{prop-oklim Y=V}).
This construction is often more convenient to study the asymptotic invariants of pseudoeffective divisors. To begin with, we first consider a pseudoeffective Cartier $\Z$-divisor $D$ on a variety $X$ of dimension $n$. Let $V \subseteq X$ be a $v$-dimensional irreducible subvariety of $D$.
We will mainly concern the case that $V$ is a positive volume subvariety of $D$.
Fix an admissible flag $V_\bullet$ on $V$:
$$
V_{\bullet}: V=V_0 \supseteq V_1\supseteq\cdots\supseteq V_{v}=\{x\}.
$$
Let $A$ be an ample Cartier $\Z$-divisor on $X$.
For each integer $k\geq 1$, consider the restricted graded linear series $W^k_\bullet:=W_\bullet(kD+A|V)$ of $kD+A$ along $V$ given by
$W_m(kD+A|V)=h^0(X|V,m(kD+A))$ for $m\geq 0$.
We define the \emph{restricted limiting Okounkov body} of a Cartier divisor $D$ along $V$ with respect to $V_{\bullet}$ as
$$\oklim_{V_\bullet}(D):=\lim_{k \to \infty} \frac{1}{k}\okbd_{V_\bullet}(W^k_\bullet) \subseteq \R^{v}_{\geq 0}.$$
One can easily check that $\oklim_{V_\bullet}(D) = \lim_{\eps\to 0+} \okbd_{V_\bullet}(D+\eps A)$.
We can extend this definition for any pseudoeffective $\R$-divisor $D$.

\begin{definition}\label{def-oklim}
Let $D$ be a pseudoeffective divisor on a variety $X$, and $V\subseteq X$ be a $v$-dimensional irreducible subvariety.
The \emph{restricted limiting Okounkov body} $\oklim_{V_\bullet}(D)$ of $D$ along $V$ with respect to $V_\bullet$ as
 $$\oklim_{V_\bullet}(D):=\lim_{\eps\to 0+} \okbd_{V_\bullet}(D+\eps A) \subseteq \R^{v}_{\geq 0}$$
for some ample divisor $A$ on $X$.
If $D$ is not pseudoeffective, then we define $\oklim_{V_\bullet}(D):=\emptyset$.
\end{definition}

This definition depends on the choice of $V$ and the admissible flag $V_\bullet$ on it, but it is independent of the choice of $A$.
By definition, $\oklim_{V_\bullet}(D)$ is a closed convex subset of $\R^{v}$.
By the inclusion $\R^{v} = \{0\}^{n-v} \times \R^{v}  \hookrightarrow \R^n$,
we often treat $\oklim_{V_\bullet}(D)$ as a subset of $\R^n$.


The following is the main property of the restricted limiting Okounkov body $\oklim_{V_\bullet}(D)$.

\begin{theorem}\label{oklimv}
Let $D$ be a pseudoeffective divisor on a variety $X$, and $V$ be a $v$-dimensional irreducible subvariety of $X$. Fix an admissible flag $V_\bullet$ on $V$.
Suppose that $V \not\subseteq \bm(D)$ and $\vol_{X|V}^+(D) > 0$.
Then we have
$$
\dim \oklim_{V_\bullet}(D)=v \text{\;\; and \;\;} \vol_{\R^{v}}(\oklim_{V_\bullet}(D))=\frac{1}{v!}\vol_{X|V}^+(D).
$$
\end{theorem}

\begin{proof}
We first consider the case where $D$ is a $\Z$-divisor. Let $A$ be an ample $\Z$-divisor on $X$.
Since $V \not\subseteq \B_-(D)$ by definition, it follows that $V \not\subseteq \B_+(kD+A)$ for any $k \geq 1$.
Let $W^k_\bullet:=W_\bullet(kD+A|V)$ be the restricted graded series of $kD+A$ along $V$ for $k \geq 1$.
Then $W^k_\bullet$ satisfies Condition (C) by \cite[Lemma 2.16]{lm-nobody}.
It follows from Theorem \ref{thrm-okbd of big} that for each $k\geq 1$, we have
$$\dim \okbd_{V_\bullet}(W^k_\bullet) =\dim V=v
\text{ and }
\vol_{\R^{v}} (\okbd_{V_\bullet}(W^k_\bullet) )=\frac{1}{v!}\vol_{X|V} (kD+A).
$$
The first equality implies $\dim \oklim_{V_\bullet}(D)=v$, and the second equality yields the following computation:
$$
\begin{array}{rl}
\vol_{\R^{v}}(\oklim_{V_\bullet}(D))&=\vol_{\R^{v}}\left(\lim\limits_{k \to \infty}\frac{1}{k} \Delta_{V_\bullet}(W^k_{\bullet})\right)\\
&=\lim\limits_{k \to \infty}\frac{1}{k^{v}}\vol_{\R^{v}}( \Delta_{V_\bullet}(W^k_{\bullet}))\\
&=\lim\limits_{k \to \infty} \frac{1}{v!}\vol_{X|V}\left(D + \frac{1}{k}A\right)\\
&= \frac{1}{v!}\vol_{X|V}^+(D).
 \end{array}
$$
Thus we have shown the assertion for $\Z$-divisors. When $D$ is a $\Q$-divisor, there is a sufficiently divisible integer $m>0$ such that $mD$ is a $\Z$-divisor. It is easy to check that $\oklim_{V_\bullet}(D) = \frac{1}{m} \oklim_{V_\bullet}(mD)$. Thus the assertions for $\Q$-divisors follow from the $\Z$-divisor case.

Now, we assume that $D$ is an $\R$-divisor. Then there exists a sequence of ample divisors $A_i$ on $X$ such that $D+A_i$ are $\Q$-divisor and $\lim_{i \to \infty} A_i = 0$. Note that $V \not\subseteq \bp(D+A_i)$. Thus $V \not\subseteq \bm(D+A_i)$ and $\vol_{X|V}^+(D + A_i) = \vol_{X|V}(D + A_i) > 0$. Since $\oklim_{V_\bullet}(D) = \lim_{i \to \infty} \oklim_{V_\bullet}(D+ A_i)$, the assertion for $\R$-divisors follow from the $\Q$-divisor case.
\end{proof}

\begin{proposition}\label{prop-oklim Y=V}
Let $D$ be a pseudoeffective divisor on a  variety $X$ of dimension $n$, and fix an admissible flag $Y_\bullet$ containing a positive volume subvariety $V=Y_{n-\kappanu(D)}$ of $D$. Consider the $(n-\kappanu(D))$-th partial flag $V_\bullet := Y_{n-\kappanu(D)\bullet}$. Then
$$\oklim_{Y_\bullet}(D)=\oklim_{V_\bullet}(D).$$
In particular, $\oklim_{Y_\bullet}(D)$ depends on the $(n-\kappanu(D))$-th partial flag $V_\bullet$.
\end{proposition}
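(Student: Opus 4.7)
Set $k := n - \kappanu(D)$ so that $V = Y_k$, and write $B_\epsilon := D + \epsilon A$ for an ample divisor $A$. Since Okounkov bodies of big divisors are super-additive under addition, both $\{\okbd_{Y_\bullet}(B_\epsilon)\}_{\epsilon>0}$ and $\{\okbd_{V_\bullet}(B_\epsilon)\}_{\epsilon>0}$ are monotone decreasing as $\epsilon \downarrow 0$, so $\oklim_{Y_\bullet}(D) = \bigcap_{\epsilon>0} \okbd_{Y_\bullet}(B_\epsilon)$ and analogously for $V_\bullet$. My plan is to prove
$$
\oklim_{Y_\bullet}(D) = \{0\}^k \times \oklim_{V_\bullet}(D) \subseteq \R^n
$$
by establishing both inclusions.

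The starting point is a valuation compatibility identity: for any nonzero $s \in H^0(X, m B_\epsilon)$, by induction along the flag and using $V \subseteq Y_i$ for $i \leq k$, one should show that $s|_V \neq 0$ if and only if $\nu_1(s) = \cdots = \nu_k(s) = 0$, and that in this case $(\nu_{k+1}(s), \ldots, \nu_n(s)) = \nu_{V_\bullet}(s|_V)$. Both directions are immediate from the inductive construction of $\nu_{Y_\bullet}$: restricting along $Y_1, Y_2, \ldots$ in turn, the assumption $V \subseteq Y_i$ forces every intermediate restriction to be nonzero. Combined with the Lazarsfeld--Musta\c{t}\u{a} slicing theorem (\cite[\S 4]{lm-nobody}) applied iteratively, this yields
$$
\okbd_{Y_\bullet}(B_\epsilon) \cap (\{0\}^k \times \R^{n-k}) = \{0\}^k \times \okbd_{V_\bullet}(B_\epsilon)
$$
for every $\epsilon > 0$, and intersecting over $\epsilon$ gives the first inclusion $\{0\}^k \times \oklim_{V_\bullet}(D) \subseteq \oklim_{Y_\bullet}(D)$.

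For the reverse inclusion the approach is a collapse-by-dimension argument. Super-additivity also gives $\okbd_{Y_\bullet}(B_\epsilon) \supseteq \oklim_{Y_\bullet}(D) + \epsilon \, \okbd_{Y_\bullet}(A)$; setting $d := \dim \oklim_{Y_\bullet}(D)$, a Steiner-type Minkowski sum estimate for the sum of a $d$-dimensional body and a full-dimensional one produces $\vol_n(\okbd_{Y_\bullet}(B_\epsilon)) \gtrsim \epsilon^{n-d}$. On the other hand, because $\kappanu(D) = n-k$ controls the order of vanishing of $\vol(D + \epsilon A)$ at $\epsilon = 0$, one has $\vol_n(\okbd_{Y_\bullet}(B_\epsilon)) = \vol(B_\epsilon)/n! = O(\epsilon^k)$. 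Comparing the two rates forces $d \leq n-k$; together with the inclusion above and Theorem \ref{oklimv} (which gives $\dim \oklim_{V_\bullet}(D) = n-k$), this pins down $d = n - k$. Matching $(n-k)$-dimensional affine hulls then yields $\oklim_{Y_\bullet}(D) \subseteq \{0\}^k \times \R^{n-k}$, and intersecting with this hyperplane together with the slice identity completes the proof:
$$
\oklim_{Y_\bullet}(D) = \bigcap_{\epsilon>0} \bigl(\okbd_{Y_\bullet}(B_\epsilon) \cap (\{0\}^k \times \R^{n-k})\bigr) = \{0\}^k \times \oklim_{V_\bullet}(D).
$$

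The main obstacle I expect will be the volume-rate asymptotic $\vol(D + \epsilon A) = O(\epsilon^{n - \kappanu(D)})$ used in the collapse argument. It expresses the fact that the numerical Iitaka dimension is the precise order of vanishing of the volume function along the ample ray through $D$, and should be extractable from the characterization of $\kappanu$ via augmented restricted volumes on positive volume subvarieties (Theorem \ref{thrm-vol+=kappanu}), but this step is the one requiring genuine care.
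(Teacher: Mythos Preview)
Your approach is essentially the same as the paper's: the slice identity via \cite[Theorem~4.26]{lm-nobody} for one inclusion (after checking $V\not\subseteq\bp(D+\epsilon A)$, which you should state explicitly), and a Minkowski-sum volume-growth bound to force $\dim\oklim_{Y_\bullet}(D)\le\kappanu(D)$ for the other. The volume-rate asymptotic $\vol(D+\epsilon A)=O(\epsilon^{\,n-\kappanu(D)})$ that you flag as the main obstacle is exactly what the paper invokes as \cite[Theorem~1.1(2)]{lehmann-nu}, so no additional argument is required there.
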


\begin{proof}
We will denote $\kappanu:=\kappanu(D)$. If $\kappanu=n$, then there is nothing to prove.
Therefore, we assume that $0\leq\kappanu<n$.
Fix an ample divisor $A$ on $X$.
By definition, the positive volume subvariety $Y_{n-\kappanu}=V$ is not contained in $\bm(D)$.
Since $Y_i\supseteq Y_{n-\kappanu}$ for $0\leq i\leq n-\kappanu$, we have $Y_i\not\subseteq\bp(D+\eps A)$ for any $\eps>0$.
Thus Theorem \ref{thrm-okbd slice} implies that if $0\leq i\leq n-\kappanu$, then
$\okbd_{Y_\bullet}(D+\eps A) \cap (\{0\}^{i} \times \R^{n-i}) = \okbd_{Y_{i\bullet}}(D + \eps A)$.
By taking the limit $\eps\to 0$,  for $0\leq i\leq n-\kappanu$, we obtain
\begin{equation}\tag{*}\label{*}
\oklim_{Y_\bullet}(D) \cap (\{0\}^{i} \times \R^{n-i}) = \lim_{\epsilon \to 0+} \okbd_{Y_{i\bullet}}(D+\epsilon A) =: \oklim_{Y_{i\bullet}}(D).
\end{equation}
Since $Y_{n-\kappanu}=V$, it is enough to check
$\oklim_{Y_\bullet}(D)\subseteq\{0\}^{n-\kappanu}\times\R^{\kappanu}$ to prove $\oklim_{Y_\bullet}(D)=\oklim_{V_\bullet}(D)$.
Suppose that this does not hold, that is,
$$\oklim_{Y_\bullet}(D)\not\subseteq\{0\}^{n-\kappanu}\times\R^{\kappanu}\subseteq\R^n.$$
We will derive a contradiction to \cite[Theorem 1.1 (2)]{lehmann-nu}, which says that $\kappanu$ is the maximum of integers $k \geq 0$ such that there exists a constant $C>0$ with $\vol_X(D+ \eps A) \geq C \eps^{n-k}$ for all $\eps >0$.
By Theorem \ref{oklimv}, $l:=\dim\oklim_{Y_\bullet}(D)>\kappanu=\dim\oklim_{V_\bullet}(D)$ and we have
$$
\oklim_{Y_\bullet}(D)\subseteq\R^{i_1}\times\R^{i_2}\times\cdots\times\R^{i_{n-\kappanu}}\times\R^{\kappanu}
$$
where $i_j=0$ or $1$, $\sum_{j=1}^{n-\kappanu}i_{j}=l-\kappanu$ and $\R^0:=\{0\}$.
For simplicity, we may just assume
$$
\oklim_{Y_\bullet}(D)\subseteq\R^{l-\kappanu}\times\{0\}^{n-l}\times\R^{\kappanu}.
$$
Now consider a sufficiently positive ample divisor $A'$ so that $\okbd_{Y_\bullet}(\epsilon A')$ contains $$
\triangle_{\epsilon} := \left\{(x_1, \cdots, x_n) \left| x_i \geq 0 \text{ and } \sum_{i=1}^{n} x_i \leq \epsilon \right.\right\}
$$
for all $\epsilon >0$. We may assume that $A$ is sufficiently ample so that $A'':=A-A'$ is also ample.
By the convexity of $\okbd_{Y_\bullet}(D)$ (cf. \cite[Proof of Corollary 4.12]{lm-nobody}), we have
$$
\oklim_{Y_\bullet}(D)+\triangle_{\epsilon}\subseteq\oklim_{Y_\bullet}(D) + \okbd_{Y_\bullet}(\epsilon A')  \subseteq  \okbd_{Y_\bullet}(D+\epsilon A'') + \okbd_{Y_\bullet}(\epsilon A')  \subseteq \okbd_{Y_\bullet}(D+\epsilon A).
$$
Since $\dim \oklim_{Y_\bullet}(D) = l$, it follows that
$$
C\cdot\vol_{\R^l}\oklim_{Y_\bullet}(D)\cdot \epsilon^{n-l}\leq \vol_{\R^n}\okbd_{Y_\bullet}(D+\epsilon A)=\frac{1}{n!}\cdot \vol_X(D+\epsilon A).
$$
for some constant $C>0$ depending only on $n$ and $l$. We get a contradiction to \cite[Theorem 1.1 (2)]{lehmann-nu} since $l>\kappanu$.
\end{proof}

\begin{corollary}\label{cor-oklimY}
Let $D$ be a pseudoeffective divisor on a smooth projective variety $X$.
Fix a positive volume subvariety $V\subseteq X$ of $D$ and consider an admissible flag $Y_\bullet$ on $X$ containing $V$. Then $\oklim_{Y_\bullet}(D) \subseteq \{0 \}^{n-\kappa_{\nu}(D)} \times \R^{\kappa_{\nu}(D)}_{\geq 0}$.
Furthermore, we have
$$
\dim \oklim_{Y_\bullet} (D)=\kappa_{\nu}(D) \text{\;\; and\;\; } \vol_{\R^{\kappa_{\nu}(D)}}(\oklim_{Y_\bullet} (D))=\frac{1}{\kappa_{\nu}(D)!} \vol^+_{X|V}(D)
$$
where $\oklim_{Y_\bullet}(D)$ is regarded as a convex subset of $\R^{\kappa_{\nu}(D)}$.
\end{corollary}
\begin{proof}
Immediate by Theorem \ref{oklimv} and Proposition \ref{prop-oklim Y=V}.
\end{proof}

\begin{remark}
We do not need to assume the generality of $Y_n=\{x\}$ as we did for the valuative Okounkov body in Theorem \ref{okval-main}.
\end{remark}

\begin{remark}\label{remk-boucksom}
Boucksom also studies the limiting Okounkov body (the numerical Okounkov body in his terminology) of a pseudoeffective divisor with respect to any admissible flag in \cite[4.1.3]{B}. He defines the limiting Okounkov body as a fiber of the global Okounkov body (see \cite[Theorem B]{lm-nobody}) over a given pseudoeffective divisor class. He proves that $\dim \oklim_{Y_\bullet}(D) \leq \kappanu(D)$ for any admissible flag $Y_\bullet$ (\cite[Lemma 4.8]{B}) and the inequality can be strict (\cite[Example 4.14]{B}).
Furthermore, he also shows that there exists an admissible flag $Y_{\bullet}$ such that $\dim \oklim_{Y_\bullet}(D) = \kappanu(D)$ when $D$ is nef (\cite[Proposition 4.9]{B}). Our result Corollary \ref{cor-oklimY} generalizes his result.
\end{remark}

Using Corollary \ref{cor-oklimY}, we can give a new characterization of numerical Iitaka dimension.

\begin{corollary}
Let $D$ be a pseudoeffective divisor on a smooth projective variety $X$.
Then we have
$$
\kappanu(D)=\max\{\dim \oklim_{Y_\bullet}(D) \mid \text{ $Y_\bullet$ is an admissible flag on $X$ }\}.
$$
\end{corollary}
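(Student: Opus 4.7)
The plan is to prove the equality by establishing the two inequalities separately, both of which can be extracted from machinery already developed in the paper.

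For the direction $\kappanu(D) \leq \max\{\dim \oklim_{Y_\bullet}(D)\}$, I would invoke Proposition \ref{general pos vol loc} (or Theorem \ref{thrm-vol+=kappanu}) to produce a positive volume subvariety $V \subseteq X$ of $D$, and then extend $V$ to an admissible flag $Y_\bullet$ on $X$ with $Y_{n - \kappanu(D)} = V$. Corollary \ref{cor-oklimY} then gives $\dim \oklim_{Y_\bullet}(D) = \kappanu(D)$ directly, so the maximum on the right-hand side is at least $\kappanu(D)$.

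For the reverse inequality $\dim \oklim_{Y_\bullet}(D) \leq \kappanu(D)$ for \emph{every} admissible flag $Y_\bullet$, the strategy is to recycle the volume-comparison argument from the proof of Proposition \ref{prop-oklim Y=V}. Fix $Y_\bullet$, set $l := \dim \oklim_{Y_\bullet}(D)$, and, as in that proof, choose a sufficiently positive ample divisor $A'$ such that $\okbd_{Y_\bullet}(\eps A')$ contains the standard simplex $\triangle_{\eps} = \{(x_1, \dots, x_n) \mid x_i \geq 0,\, \sum x_i \leq \eps\}$ for every $\eps > 0$. Pick an ample divisor $A$ with $A'' := A - A'$ ample. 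By convexity and superadditivity of Okounkov bodies, we obtain
$$
\oklim_{Y_\bullet}(D) + \triangle_{\eps} \subseteq \okbd_{Y_\bullet}(D + \eps A''\!) + \okbd_{Y_\bullet}(\eps A') \subseteq \okbd_{Y_\bullet}(D + \eps A).
$$
Taking Euclidean volumes in $\R^n$ of the convex hull of $\oklim_{Y_\bullet}(D) \cup \triangle_{\eps}$, which has dimension $n$ whenever $l \geq 0$ because $\oklim_{Y_\bullet}(D)$ is an $l$-dimensional body and $\triangle_{\eps}$ provides the missing $n - l$ transverse directions, yields a constant $C' > 0$ (depending only on $n$, $l$, and $\vol_{\R^l}(\oklim_{Y_\bullet}(D))$) with
$$
C' \cdot \eps^{n-l} \leq \vol_X(D + \eps A).
$$
If one had $l > \kappanu(D)$, then this lower bound would contradict the asymptotic behavior of $\vol_X(D + \eps A)$ as $\eps \to 0^+$ established in \cite[Theorem 1.1 (2)]{lehmann-nu}. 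Hence $l \leq \kappanu(D)$, and combined with the first direction this yields the corollary.

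The main obstacle is the bookkeeping behind the Minkowski-sum volume estimate: one must verify that the $l$-dimensional body $\oklim_{Y_\bullet}(D)$, which may sit in a nonstandard coordinate subspace of $\R^n$, combines with $\triangle_{\eps}$ to give a full-dimensional body with Euclidean volume comparable to $\vol_{\R^l}(\oklim_{Y_\bullet}(D)) \cdot \eps^{n-l}$. This is essentially the content of the final paragraph of the proof of Proposition \ref{prop-oklim Y=V}, so one can simply refer to that argument rather than reprove it. The edge case $l = 0$ is trivial since $\kappanu(D) \geq 0$ for any pseudoeffective $D$, and the case where $D$ is not pseudoeffective is excluded by hypothesis.
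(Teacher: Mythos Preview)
Your proof is correct. The first direction (achieving $\kappanu(D)$ via a flag through a positive volume subvariety) matches the paper exactly: both invoke Corollary \ref{cor-oklimY}.

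For the upper bound $\dim \oklim_{Y_\bullet}(D) \leq \kappanu(D)$ for arbitrary $Y_\bullet$, the paper simply cites Boucksom's result \cite[Lemma 4.8]{B}, whereas you reprove it by extracting the volume-comparison argument from the proof of Proposition \ref{prop-oklim Y=V}. Your observation that this argument does not actually use the hypothesis that $Y_\bullet$ passes through a positive volume subvariety is correct and worth noting: the Minkowski-sum bound $\oklim_{Y_\bullet}(D) + \triangle_\eps \subseteq \okbd_{Y_\bullet}(D+\eps A)$ and the resulting inequality $C'\eps^{n-l} \leq \vol_X(D+\eps A)$ hold for any admissible flag, and the contradiction with \cite[Theorem 1.1 (2)]{lehmann-nu} when $l > \kappanu(D)$ then finishes the job. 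This makes your argument self-contained within the paper, at the cost of reproducing an estimate rather than citing it; the paper's route is shorter but relies on the external reference.
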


\begin{proof}
By \cite[Lemma 4.8]{B}, we know that $\dim \oklim_{Y_\bullet}(D) \leq \kappanu(D)$ for any admissible flag $Y_\bullet$ on $X$. If $Y_\bullet$ contains a positive volume subvariety, then it follows from Corollary \ref{cor-oklimY} that $\dim \oklim_{Y_\bullet}(D) = \kappanu(D)$. Thus the assertion follows.
\end{proof}

If $D$ is a big divisor, then by \cite[Proposition 4.1]{lm-nobody} and \cite[Theorem A]{Jow} it is known that $\okbd_{Y_\bullet}(D)=\okbd_{Y_\bullet}(D')$ for all admissible flags $Y_\bullet$ on $X$
if and only if $D\equiv D'$. Remark \ref{ex-okval-not num} implies that the statement is false in general in the pseudoeffective case for $\okval_{Y_\bullet}(D)$.
We prove next that $\oklim_{Y_\bullet}(D)$ is an appropriate generalization of $\okbd_{Y_\bullet}(D)$ which makes the statement true.

For a pseudoeffective divisor $D$, we define the following set of divisors
$$
div(\oklim(D)):=\left\{E\left |
\begin{array}{l}
\text{ there exists an admissible flag $Y_\bullet$ with $Y_1=E$}\\
 \text{ such that} \oklim_{Y_\bullet}(D)_{x_1=0}=\emptyset
\end{array} \right.\right\}.
$$

\begin{lemma}\label{lem-div oklim}
The divisors in $div(\oklim(D))$ are precisely the divisorial components of $\bm(D)$ and
the set $div(\oklim(D))$ is finite.
\end{lemma}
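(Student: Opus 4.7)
The plan is to identify, for each prime divisor $E$ on $X$ appearing as $Y_1$ in an admissible flag $Y_\bullet$, the minimum value of the first coordinate $x_1$ over $\oklim_{Y_\bullet}(D)$ with the asymptotic valuation $\ord_E(\|D\|)$. Granting this identification, the condition $\oklim_{Y_\bullet}(D)_{x_1 = 0} = \emptyset$ is equivalent to $\ord_E(\|D\|) > 0$, and Theorem \ref{thrm-B- by sigmanum} then characterizes this in turn as $E$ being a divisorial component of $\bm(D)$.

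First I would recall the slice formula for the Okounkov body of a big divisor $D'$: for any admissible flag $Y_\bullet$ with $Y_1 = E$, one has
$$
\min\{x_1 \mid (x_1, \ldots, x_n) \in \okbd_{Y_\bullet}(D')\} = \ord_E(\|D'\|).
$$
This is a direct consequence of the construction of $\okbd_{Y_\bullet}(D')$ in Subsection \ref{ok of LM}, since the first coordinate of $\nu_{Y_\bullet}$ is by definition $\ord_E$, and for big $D'$ the infimum of $\ord_E(D'')$ over effective representatives $D'' \sim_\R D'$ coincides with $\ord_E(\|D'\|)$.

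Next I would apply this formula to $D' = D + \epsilon A$, which is big for every $\epsilon > 0$, and pass to the limit. Since $\oklim_{Y_\bullet}(D) = \bigcap_{\epsilon > 0} \okbd_{Y_\bullet}(D + \epsilon A)$ is a nested intersection (as $\epsilon$ decreases) of nonempty compact convex sets, it is itself a nonempty compact convex set, and an elementary compactness argument yields
$$
\min\{x_1 \mid (x_1, \ldots, x_n) \in \oklim_{Y_\bullet}(D)\} = \lim_{\epsilon \to 0+} \ord_E(\|D + \epsilon A\|) = \ord_E(\|D\|),
$$
the last equality being the very definition of the asymptotic valuation for pseudoeffective $D$. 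Consequently the condition $\oklim_{Y_\bullet}(D)_{x_1 = 0} = \emptyset$ does not depend on the specific admissible flag with $Y_1 = E$ (it depends only on $E$) and holds precisely when $\ord_E(\|D\|) > 0$, which by Theorem \ref{thrm-B- by sigmanum} is equivalent to $E$ being a divisorial component of $\bm(D)$.

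The finiteness of $div(\oklim(D))$ then reduces to the standard fact that there are only finitely many prime divisors $E$ with $\ord_E(\|D\|) > 0$: this is the finiteness of the support of Nakayama's negative part $N_\sigma(D)$ in the divisorial Zariski decomposition (see \cite{nakayama}). The main delicate point I expect is the interchange of the minimum with the nested intersection in the second step; this will be justified by noting that all the bodies $\okbd_{Y_\bullet}(D + \epsilon A)$ for $\epsilon \in (0,1]$ sit inside the fixed compact set $\okbd_{Y_\bullet}(D + A)$, so any sequence of points realizing $\min x_1$ on $\okbd_{Y_\bullet}(D + \epsilon_k A)$ with $\epsilon_k \to 0$ has a convergent subsequence whose limit lies in $\oklim_{Y_\bullet}(D)$.
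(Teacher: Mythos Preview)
Your proposal is correct and follows essentially the same approach as the paper: both identify $\min\{x_1 \mid (x_1,\ldots,x_n)\in\oklim_{Y_\bullet}(D)\}$ with $\ord_E(\|D\|)$ by first treating the big divisors $D+\epsilon A$ and then passing to the limit, then invoke Theorem~\ref{thrm-B- by sigmanum} and Nakayama's finiteness result. Your version is in fact slightly more detailed, since you supply the compactness justification for the limit interchange that the paper leaves implicit.
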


\begin{proof}
Let $Y_\bullet$ be an admissible flag on $X$ such that $Y_1=E$.
By the definition of the Okounkov body, it is easy to see that for any ample divisor $A$ and $\eps>0$,
$$
\ord_E(||D+\eps A||)=\inf\left\{x_1\left|(x_1,\cdots,x_n)\in\okbd_{Y_\bullet}(D+\eps A)\right.\right\}.
$$
If we let $\eps\to 0$, then we have
$$
\ord_E(||D||)=\inf\left\{x_1\left|(x_1,\cdots,x_n)\in\oklim_{Y_\bullet}(D)\right.\right\}.
$$
By Theorem \ref{thrm-B- by sigmanum}, $\ord_E(||D||)>0$ if and only if $E\subseteq\bm(D)$.
Thus we obtain the desired statement.
The set $div(\oklim(D))$ is finite by \cite[Corollary 1.11]{nakayama}.
\end{proof}

We now prove a generalization of Jow's theorem for the limiting Okounkov bodies.

\begin{theorem}\label{Jow for oklim}
Let $D$ and $D'$ be pseudoeffective divisors on $X$.
Then the following are equivalent:
\begin{enumerate}
\item[$(1)$] $D \equiv D'$

\item[$(2)$] $\oklim_{Y_\bullet}(D) = \oklim_{Y_\bullet}(D')$ with respect to any admissible flags $Y_\bullet$ on $X$.
\end{enumerate}
\end{theorem}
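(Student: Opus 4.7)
The direction $(1) \Rightarrow (2)$ is a straightforward reduction to the big case. Fix any admissible flag $Y_\bullet$ and any ample divisor $A$ on $X$. For every $\eps>0$ the divisors $D+\eps A$ and $D'+\eps A$ are big and still satisfy $D+\eps A \equiv D'+\eps A$, so Lazarsfeld--Musta\c{t}\u{a}'s numerical invariance of the big Okounkov body \cite[Proposition 4.1]{lm-nobody} gives $\okbd_{Y_\bullet}(D+\eps A)=\okbd_{Y_\bullet}(D'+\eps A)$. Intersecting these equalities over $\eps>0$ yields $\oklim_{Y_\bullet}(D)=\oklim_{Y_\bullet}(D')$.

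For the converse $(2) \Rightarrow (1)$, my plan is to combine two ingredients: first, recover the divisorial part of the Nakayama--Zariski decomposition from the limiting Okounkov bodies, and second, reduce the remaining numerical data to Jow's theorem \cite[Theorem A]{Jow}. For each prime divisor $E$ on $X$, I would take any admissible flag $Y_\bullet$ with $Y_1=E$ (nonsingular at a general point $x\in E$) and, exactly as in the proof of Lemma \ref{lem-div oklim}, read
\[
\ord_E(\|D\|) = \inf\bigl\{x_1 \mid (x_1,\dots,x_n)\in\oklim_{Y_\bullet}(D)\bigr\}.
\]
The hypothesis then forces $\ord_E(\|D\|)=\ord_E(\|D'\|)$ for every prime divisor $E$, so the negative parts of the Nakayama--Zariski decompositions of $D$ and $D'$ agree; in particular $\bm(D)=\bm(D')$.

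To finish, fix an ample divisor $A$ and any $\eps>0$, so that $D+\eps A$ and $D'+\eps A$ are big. If we can establish $\okbd_{Y_\bullet}(D+\eps A)=\okbd_{Y_\bullet}(D'+\eps A)$ for every admissible flag $Y_\bullet$, then Jow's theorem applied to these big classes gives $D+\eps A \equiv D'+\eps A$, and hence $D\equiv D'$. The hard part will be precisely this upgrading of the limit equality to equality at each positive $\eps$, since $\oklim_{Y_\bullet}(D)=\bigcap_{\eps>0}\okbd_{Y_\bullet}(D+\eps A)$ does not by itself determine the individual big bodies. My plan to bypass this obstacle is to use the full family $\{\oklim_{Y_\bullet}(\cdot)\}_{Y_\bullet}$ to recover $[D]\in N^1(X)_\R$ directly: combining the divisorial identification from the previous paragraph with partial-flag slicing (Remark \ref{remk-partial flag}) and Corollary \ref{cor-oklimY} applied to admissible flags whose penultimate member $Y_{n-1}$ is a chosen irreducible curve $C$ outside $\bm(D)$, one extracts the intersection numbers $D\cdot C$ from the limiting Okounkov bodies. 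Since $\bm(D)=\bm(D')$, the same data are recovered for $D'$, and performing this for enough curves $C$ pins down $[D]=[D']$; the required equality of big Okounkov bodies is then automatic via \cite[Proposition 4.1]{lm-nobody}.
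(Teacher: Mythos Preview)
Your $(1)\Rightarrow(2)$ argument and your overall strategy for $(2)\Rightarrow(1)$ both match the paper's: recover each $\ord_E(\|D\|)$ from flags with $Y_1=E$, then recover intersection numbers $Y_{n-1}\cdot D$ from sufficiently general flags, and conclude by letting $Y_{n-1}$ run through a basis of $N_1(X)_\R$.

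The gap is in your mechanism for the curve step. Corollary~\ref{cor-oklimY} requires $V$ to be a \emph{positive volume subvariety} of $D$, which by definition has $\dim V=\kappa_\nu(D)$; a curve $C=Y_{n-1}$ satisfies this only when $\kappa_\nu(D)=1$, so the corollary simply does not apply in general. The paper instead uses a slicing argument that does not go through Corollary~\ref{cor-oklimY}: for sufficiently general $Y_\bullet$ with $Y_{n-1}\not\subseteq\bm(D)$, one applies \cite[Theorem~3.4(b)]{Jow} to the big divisors $D+\eps A$ and passes to the limit to get
\[
\vol_{\R}\bigl(\oklim_{Y_\bullet}(D)_{x_1=\cdots=x_{n-1}=0}\bigr)=\vol^+_{X|Y_{n-1}}(D).
\]
A second, nontrivial ingredient (Lemma~\ref{jow lemma} in the paper, obtained from \cite[Corollary~3.3]{Jow} by a limiting argument) is then needed:
\[
\vol^+_{X|Y_{n-1}}(D)=Y_{n-1}\cdot D-\sum_{i=1}^{l}\sum_{p\in Y_{n-1}\cap E_i}\ord_{E_i}(\|D\|),
\]
with $E_1,\dots,E_l$ the divisorial components of $\bm(D)$. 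Only with both pieces in hand can one solve for $Y_{n-1}\cdot D$ in terms of quantities already determined by the family $\{\oklim_{Y_\bullet}(D)\}$. Your sketch supplies neither the slice identification nor this formula, and the reference you give cannot substitute for them.
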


\begin{proof}
The equivalence $(1)\Leftrightarrow (2)$ is known for big divisors by \cite[Proposition 4.1]{lm-nobody} and \cite[Theorem A]{Jow}.
Thus we assume that $D, D'$ are not big.
The implication (1)$\Rightarrow$(2) follows from the big case:
since $\okbd_{Y_\bullet}(D+\epsilon A)=\okbd_{Y_\bullet}(D'+\epsilon A)$ for any ample divisor $A$ and any $\epsilon>0$, we have
$$
\oklim_{Y_\bullet}(D)=\lim_{\epsilon\to 0}\okbd_{Y_\bullet}(D+\epsilon A)=\lim_{\epsilon\to 0}\okbd_{Y_\bullet}(D'+\epsilon A)=\oklim_{Y_\bullet}(D').
$$
To prove (1)$\Leftarrow$(2), we use the argument used to prove the big case by Jow in \cite{Jow}.
Let $E_1,\cdots, E_l$ be the divisorial components of $\bm(D)$ and $A$ an ample divisor on $X$.
For any sufficiently general admissible flag $Y_\bullet$ on $X$ such that
$Y_{n-1}\not\subseteq\bm(D)$, we have $Y_{n-1}\not\subseteq\bp(D+A)$ for any ample divisor $A$.
We see that
$$
\begin{array}{rl}
&\vol_{\R^1}(\oklim_{Y_\bullet}(D)_{x_1=\cdots=x_{n-1}=0})\\
=& \vol_{\R^1}\left(\lim\limits_{\eps\to 0+}\okbd_{Y_\bullet}(D+\eps A)\right)_{x_1=\cdots=x_{n-1}=0}\\
=& \vol_{\R^1}\left(\lim\limits_{\eps\to 0+}\okbd_{Y_\bullet}(D+\eps A)_{x_1=\cdots=x_{n-1}=0}\right)\\
=&\lim\limits_{\eps\to 0+}\vol_{\R^1}(\okbd_{Y_\bullet}(D+\eps A)_{x_1=\cdots=x_{n-1}=0})\\
=&\lim\limits_{\eps\to 0+}\vol_{X|Y_{n-1}}(D+\eps A)\;\;\;\;\;\;\;(\text{by Theorem \ref{thrm-okbd slice} or \cite[Theorem 3.4 (b)]{Jow}})\\
=&Y_{n-1}\cdot D-\sum\limits_{i=1}^l\sum\limits_{p \in Y_{n-1} \cap E_i }\ord_{E_i}(||D||)\;\;\;\;\;\;(\text{by Lemma \ref{jow lemma}}).
\end{array}
$$
As can be seen in the proof of Lemma \ref{lem-div oklim}, we can read off $\ord_{E_i}(||D||)$ from the limiting Okounkov bodies  $\oklim_{Y'_\bullet}(D)$ with respect to admissible flags $Y'_\bullet$ such that $Y'_1=E_i$.
Therefore, if $\oklim_{Y_\bullet}(D)=\oklim_{Y_\bullet}(D')$ for any admissible flags $Y_{\bullet}$ on $X$, then $Y_{n-1}\cdot D=Y_{n-1}\cdot D'$.
Let $\rho:=\dim\N^1(X)_\R$.
As in \cite[Proof of Theorem A]{Jow}, we can choose general admissible flag $Y^1_\bullet,\cdots,Y^{\rho}_\bullet$ consisting of subvarieties that are transversal complete intersections of very ample divisors such that $Y^1_{n-1},\cdots,Y^{\rho}_{n-1}$ form a basis of $\text{N}_1(X)_\R$.
Since we can read off the intersection numbers $Y^i_{n-1} \cdot D = Y^i_{n-1} \cdot D'$ from the limiting Okounkov bodies $\oklim_{Y_\bullet^i}(D)=\oklim_{Y_\bullet^i}(D')$  for $1 \leq i \leq \rho$, we can conclude that $D\equiv D'$.
\end{proof}

It remains to prove the following lemma that is used in the proof above.

\begin{lemma}[cf. {\cite[Corollary 3.3]{Jow}}]\label{jow lemma}
Let $Y_\bullet$ be a  sufficiently general admissible flag on $X$.
For a pseudoeffective divisor $D$ on $X$, let $E_1,E_2,\cdots,E_l$ be the divisorial components of $\bm(D)$. Then we have
$$
\vol_{X|Y_{n-1}}^+(D)=\lim\limits_{\eps\to 0+}\vol_{X|Y_{n-1}}(D+\eps A)=Y_{n-1}\cdot D-\sum_{i=1}^l\sum_{p \in Y_{n-1} \cap E_i }\ord_{E_i}(||D||).
$$
\end{lemma}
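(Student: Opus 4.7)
The first equality is immediate from Definition~\ref{def-volume+}; the substance of the lemma lies in the second equality, and my plan is to apply the big-divisor formula \cite[Corollary~3.3]{Jow} to the perturbation $D+\eps A$ and then pass to the limit $\eps\to 0+$.

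First, fix an ample Cartier divisor $A$ on $X$. For every $\eps>0$, the divisor $D+\eps A$ is big, so Jow's Corollary~3.3 applied to the sufficiently general admissible flag $Y_\bullet$ gives
$$
\vol_{X|Y_{n-1}}(D+\eps A) = Y_{n-1}\cdot (D+\eps A) - \sum_E \sum_{p\in Y_{n-1}\cap E}\ord_E(||D+\eps A||),
$$
where $E$ runs over divisors on $X$ with $\ord_E(||D+\eps A||)>0$; by Theorem~\ref{thrm-B- by sigmanum} these are exactly the divisorial components of $\bm(D+\eps A)$.

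Next, I would check that the index set can be taken uniformly to be $\{E_1,\ldots,E_l\}$. For any ample divisor $A'$ on $X$, the divisor $\eps A+A'$ is again ample, hence
$$
\bm(D+\eps A) = \bigcup_{A'\text{ ample}}\text{SB}(D+\eps A+A') \subseteq \bigcup_{A''\text{ ample}}\text{SB}(D+A'') = \bm(D).
$$
In particular, every divisorial component of $\bm(D+\eps A)$ appears among $E_1,\ldots,E_l$, and enlarging the sum to $\{E_1,\ldots,E_l\}$ contributes only zero terms (again by Theorem~\ref{thrm-B- by sigmanum}). Therefore
$$
\vol_{X|Y_{n-1}}(D+\eps A) = Y_{n-1}\cdot (D+\eps A) - \sum_{i=1}^{l}\sum_{p\in Y_{n-1}\cap E_i}\ord_{E_i}(||D+\eps A||).
$$

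Finally, I would take the limit $\eps\to 0+$ termwise: the left-hand side tends to $\vol^+_{X|Y_{n-1}}(D)$ by Definition~\ref{def-volume+}; $Y_{n-1}\cdot (D+\eps A)\to Y_{n-1}\cdot D$ by continuity of the intersection product; each cardinality $\#(Y_{n-1}\cap E_i)$ is independent of $\eps$ because $Y_{n-1}\not\subseteq E_i$ by the generality of $Y_\bullet$; and $\ord_{E_i}(||D+\eps A||)\to\ord_{E_i}(||D||)$ by the very definition of the asymptotic order on the pseudoeffective cone. The main obstacle is ensuring that Jow's formula applies with a flag whose generality can be chosen independently of $\eps$; this is exactly what the phrase \emph{sufficiently general admissible flag} is designed to enforce, and since the finite set $\{E_1,\ldots,E_l\}$ and the class of $A$ do not depend on $\eps$, one such flag works uniformly for all small $\eps>0$, making the termwise passage to the limit legitimate.
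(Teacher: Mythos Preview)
Your proposal is correct and follows essentially the same approach as the paper: apply Jow's big-divisor formula to $D+\eps A$ and pass to the limit $\eps\to 0+$. Your version is in fact more careful than the paper's own proof, since you explicitly justify the inclusion $\bm(D+\eps A)\subseteq\bm(D)$ (so the sum can be taken uniformly over $E_1,\ldots,E_l$) and address the uniformity of the flag in $\eps$, points the paper leaves implicit.
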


\begin{proof}
Suppose first that $D$ is big.
Since $Y_{n-1}$ is very general, we have $D\in\Bigdiv^{Y_{n-1}}(X)$.
Thus $\vol^+_{X|Y_{n-1}}(D)=\vol_{X|Y_{n-1}}(D)$ and the statement is nothing but \cite[Corollary 3.3]{Jow}.

Now let $D$ be a pseudoeffective divisor.
Applying the statement for the case of big divisors to $\vol_{X|Y_{n-1}}(D+\eps A)$,
we obtain
$$
\begin{array}{rl}
\vol_{X|Y_{n-1}}^+(D)=&\lim\limits_{\eps\to 0+}\vol_{X|Y_{n-1}}(D+\eps A)\\
=&\lim\limits_{\eps\to 0+}\left(Y_{n-1}\cdot (D+\eps A)-\sum\limits_{i=1}^l\sum\limits_{p \in Y_{n-1} \cap E_i }\ord_{E_i}(||D+\eps A||)\right)\\
=&Y_{n-1}\cdot D-\sum\limits_{i=1}^l\sum\limits_{p \in Y_{n-1} \cap E_i }\ord_{E_i}(||D||).
\end{array}
$$
\end{proof}

\begin{remark}\label{remk-fulger0}
Fulger pointed out to us that in the equation of Lemma \ref{jow lemma} we actually have
$$
\sum_{i=1}^l\sum_{p \in Y_{n-1} \cap E_i }\ord_{E_i}(||D||)=Y_{n-1}\cdot N
$$
where $N$ is the negative part of the divisorial Zariski decomposition of $D$.
\end{remark}

\begin{remark}\label{remk-fulger}
In Theorem \ref{Jow for oklim}, it is tempting to expect that $D \equiv D'$ if and only if $\oklim_{Y_\bullet}(D)=\oklim_{Y_\bullet}(D')$ for any admissible flags $Y_{\bullet}$ containing positive volume subvarieties of both $D$ and $D'$.
However, it is not true and we also need to consider the admissible flags not containing the positive volume loci of the divisor.
For example, consider a pseudoeffective divisor $D\not\equiv 0$ on a variety $X$ with $\kappanu(D)=0$. Let $D':=2D$ so that $D\not\equiv D'$. Every point $x \in X$ with $x \not\in \bm(D)=\bm(D')$ is a positive volume subvariety of both $D$ and $D'$. However, we see that for any admissible flag $Y_{\bullet}$ containing $x$, we have
$$
\oklim_{Y_\bullet}(D)=\oklim_{Y_\bullet}(D')=\{ \text{the origin of } \R^{\dim X}\}.
$$
In fact, if we consider the admissible flags $Y_{\bullet}$ containing positive volume subvarieties of both $D$ and $D'$ in Theorem \ref{Jow for oklim}, then using Remark \ref{remk-fulger0} we only obtain $P \equiv P'$ where $D=P+N$ and $D'=P'+N'$ are the divisorial Zariski decompositions.
\end{remark}

\begin{remark}\label{remk-restricted ok}
Let $D$ be a pseudoeffective divisor on $X$, and $V\subseteq X$ be a smooth positive volume subvariety of $D$. Fix an admissible flag $Y_\bullet$ containing $V$. The $(n-\kappanu(D))$-th partial flag $Y_{(n-\kappanu(D))\bullet}$ induces an admissible flag $V_\bullet$ on $V$.
Recall that $\oklim_{Y_\bullet}(D)=\oklim_{V_\bullet}(D)=\lim_{\eps \to 0+}\okbd_{V_\bullet}(D+\eps A)$ where $A$ is an ample divisor on $X$.
By Theorem \ref{D on +vol big}, $D|_V$ is a big divisor on $V$, and $A|_V$ is an ample divisor on $V$.
By definition, $\okbd_{V_\bullet}(D+\eps A) \subseteq \okbd_{V_\bullet}((D+\eps A)|_V)$. Thus we obtain
$$
\oklim_{Y_\bullet}(D) = \lim_{\eps \to 0+}\okbd_{V_\bullet}(D+\eps A) \subseteq  \lim_{\eps \to 0+}\okbd_{V_\bullet}((D+\eps A)|_V) = \okbd_{V_\bullet}(D).
$$
By Theorem \ref{thrm-okbd of big}, we have
$$
\dim\okbd_{V_\bullet}(D|_V)=\kappanu(D) \text{ and }
\vol_{\R^{\kappanu(D)}}\okbd_{V_\bullet}(D|_V)=\frac{1}{\kappanu(D)!}\vol_V(D|_V).
$$
Let $Y_\bullet$ be an admissible flag on $X$ that is general enough so that it contains both a Nakayama subvariety $U$ and a positive volume subvariety $V$ of a pseudoeffective divisor $D$.
Then we have the following inclusions:
\begin{equation}\tag{\#}\label{sharp}
\okval_{Y_\bullet}(D)\subseteq \oklim_{Y_\bullet}(D) \subseteq\okbd_{V_\bullet}(D|_V).
\end{equation}
This confirms the inequalities $\vol_{X|V}(D) \leq \vol_{X|V}^+(D) \leq \vol_{V}(D|_V)$
which we saw in Subsection \ref{subsec-Iitaka}.
If $D$ is big, then all the inclusions are equalities.
However, we will see in the next section that if $D$ is not big, then they are strict in general.
\end{remark}

\section{Examples}\label{exsec}

In this section, we exhibit various examples and counterexamples related to our results.
We start with an example that shows that for some badly chosen admissible flags, Theorems \ref{okval-main} and \ref{oklimv} do not hold.

\begin{example}\label{ex-ok w/ bad flag}
Let $\pi : S \to \P^2$ be a blow-up of $\P^2$ at two distinct points with exceptional divisors $E_1$ and $E_2$, and $L \in |\pi^* \mathcal{O}_{\P^2}(1)|$ be an effective divisor containing $E_1$ but not containing $E_2$. Consider a non-big effective divisor $D:=(L-E_1)+E_2$, and fix an admissible flag
$$
Y_\bullet: S\supseteq E_2 \supseteq \{ x \}
$$
where $x$ is a general point on $E_2$.
Here we note that $E_2$ is neither a Nakayama subvariety nor a positive volume subvariety of $D$.
We have
$$
\okval_{Y_\bullet}(D)=\oklim_{Y_\bullet}(D)=\{(x_1, x_2 ) \in \R^2 \mid 1 \leq x_1 \leq 2 \text{ and } x_2=0 \}
$$
whose Euclidean volume in the $x_1$-axis is 1.
However, note that $\bm(D)=\bp(D+\eps A)=E_2$ for an ample divisor $A$ and a small $\eps >0$. Thus we  see that $\vol_{X|E_2}(D)=\vol_{X|E_2}^+(D)=0$.
We also remark that $\okval_{Y_\bullet}(D)=\oklim_{Y_\bullet}(D)$ is not in the $x_2$-axis as expected.
\end{example}

Examples \ref{okval notsubset oklim} and \ref{ex-okval<oklim} show that the inequalities in (\ref{sharp}) of Remark \ref{remk-restricted ok} can be strict.

\begin{example}\label{okval notsubset oklim}
In this example, we see that $\okval_{Y_\bullet}(D) \subsetneq \oklim_{Y_\bullet}(D)$ even if $\kappa(D)=\kappanu(D)$. Let $S$ be a relatively minimal rational elliptic surface, and $H$ a sufficiently positive ample divisor. Take a general element $V \in |H|$. Fix an admissible flag
$$
{Y_\bullet}: S\supseteq V\supseteq \{ x \}.
$$
Note that $\kappa(-K_S)=\kappanu(-K_S)=1$, and $V$ is both a Nakayama subvariety and a positive volume subvariety of $-K_S$. Thus we have
$\vol_{\R^1}(\okval_{Y_\bullet}(-K_S))=\vol_{S|V}(-K_S)$ and $\vol_{\R^1}(\oklim_{Y_\bullet}(-K_S))=\vol_{S|V}^+(-K_S)$. However, we saw in Example \ref{vol<vol+} that $\vol_{S|V}(-K_S) < \vol_{S|V}^+(-K_S)$. Thus $\okval_{Y_\bullet}(-K_S) \subsetneq \oklim_{Y_\bullet}(-K_S)$.
\end{example}

\begin{example}\label{ex-okval<oklim}
Let $S:=\P(E)$ where $E$ is a rank two vector bundle on an elliptic curve $C$ such that it is a nontrivial extension of $\mathcal{O}_C$ by $\mathcal{O}_C$, and $H$ be the tautological divisor of $\P(E)$. Then we can easily check that $H$ is nef and $\kappa(H)=0$ but $\kappanu(H)=1$. Let $F$ be a general fiber of the natural ruling $\pi : S \to C$.
Note that any general point in $S$ is a Nakayama subvariety of $H$ and $F$ is a positive volume subvariety of $H$.
Thus take an admissible flag
$$
Y_\bullet: S\supseteq F\supseteq \{ x \}
$$
where $x$ is a general point on a general fiber $F$.
Then we can easily compute the following.

\begin{enumerate}
 \item $\okval_{Y_\bullet}(H)=\{ (0,0) \}$ and $\vol_{S|\{ x \}}(H)=1$ by convention.
 \item $\oklim_{Y_\bullet}(H)=\{ (x_1, x_2) \mid x_1=0 \text{ and } 0 \leq x_2 \leq 1 \}$ and $\vol^+_{S|F}(H)=1$.
 \item $\okbd_{Y_\bullet}(H|_F)=  \oklim_{Y_\bullet}(H)$.
\end{enumerate}

\noindent For $\oklim_{Y_\bullet}(H)$, we see the convergence of $\lim_{\epsilon \to 0+} \Delta_{Y_\bullet}(H+\epsilon A)$ for any ample divisor $A$ in the following picture.

\begin{center}
\begin{tikzpicture}[line cap=round,line join=round,>=triangle 45,x=1.0cm,y=1.0cm]
\clip(-1.58,-0.76) rectangle (4.46,4.3);
\draw [->,line width=1.2pt] (-1.,0.) -- (4.,0.);
\draw [->,line width=1.2pt] (0.,-1.) -- (0.,4.);
\draw (-0.7,4.12) node[anchor=north west] {$x_2$};
\draw (3.5,-0.15) node[anchor=north west] {$x_1$};
\draw (-1.3,3.42) node[anchor=north west] {$1+\epsilon$};
\draw (-0.6,2.4) node[anchor=north west] {$1$};
\draw (-0.2,1.98)-- (0.24,1.98);
\draw [line width=3.5pt] (0.,0.)-- (0.,1.98);
\draw (-1.7,1.52) node[anchor=north west] {$\oklim_{Y_\bullet}(H)$};
\draw (1.4,3.76) node[anchor=north west] {$\Delta(H+\epsilon A)$};
\draw [->] (1.239945618805368,2.9917884396105605) -- (0.,1.98);
\draw [dotted, line width = 1.0pt] (1.2221979352969778,2.97730643271339)-- (1.28,0.);
\draw [dotted, line width = 1.0pt] (1.2221979352969778,2.97730643271339)-- (0.,2.98);
\draw [dotted, line width = 1.0pt] (0.8591403270475781,2.6810535282567285)-- (0.,2.7);
\draw [dotted, line width = 1.0pt] (0.8196014056136802,2.6819254731929894)-- (0.84,0.);
\draw [dotted, line width = 1.0pt] (0.,2.34)-- (0.44268064962666415,2.341224844814632);
\draw [dotted, line width = 1.0pt] (0.44268064962666415,2.341224844814632)-- (0.46,0.);
\end{tikzpicture}
\end{center}
\end{example}

In fact, we use the following theorem to compute the limiting Okounkov body $\oklim_{Y_\bullet}(D)$ of a pseudoeffective divisor $D$ on a surface (cf. \cite[Theorem 6.4]{lm-nobody})

\begin{theorem}\label{surfokbd}
Let $S$ be a smooth projective surface, and $D$ be a pseudoeffective divisor on $S$. Fix an admissible flag $Y_\bullet: S\supseteq C \supseteq \{ x \}$.
Let $a:=\mult_C N$ where $D=P+N$ is the Zariski decomposition, and $\mu := \sup \{ s \geq 0 \mid D-sC \text{ is pseudoeffective} \}$. Consider a divisor $D_t:= D-tC$ for $a \leq t \leq \mu$. Denote by $D_t=P_t+N_t$ the Zariski decomposition. Let $\alpha(t):=\ord_x(N_t|_C)$ and $\beta(t):=\alpha(t)+C.P_t$. Then the limiting Okounkov body of $D$ is given by
$$
\oklim_{Y_\bullet}(D)=\{(x_1, x_2) \in \R^2 \mid a \leq x_1 \leq \mu \text{ and } \alpha(x_1) \leq x_2 \leq \beta(x_2) \}.
$$
\end{theorem}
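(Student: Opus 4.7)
The plan is to reduce to the big-divisor case of Lazarsfeld--Musta\c{t}\u{a} (\cite[Theorem 6.4]{lm-nobody}), which establishes the same description when $D$ itself is big, by perturbing with an ample divisor and taking Hausdorff limits. Fix an ample divisor $A$ on $S$ and set $D_\epsilon := D + \epsilon A$, which is big for every $\epsilon > 0$. Write the Zariski decomposition of $D_\epsilon - tC$ as $P_{t,\epsilon} + N_{t,\epsilon}$, and let $a_\epsilon, \mu_\epsilon, \alpha_\epsilon(t), \beta_\epsilon(t)$ denote the obvious $\epsilon$-analogues of $a, \mu, \alpha(t), \beta(t)$. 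By \cite[Theorem 6.4]{lm-nobody} applied to $D_\epsilon$,
$$
\okbd_{Y_\bullet}(D_\epsilon) = \{(x_1, x_2) \in \R^2 \mid a_\epsilon \leq x_1 \leq \mu_\epsilon,\; \alpha_\epsilon(x_1) \leq x_2 \leq \beta_\epsilon(x_1)\},
$$
and since $\oklim_{Y_\bullet}(D) = \bigcap_{\epsilon > 0} \okbd_{Y_\bullet}(D_\epsilon)$ by Definition \ref{def-oklim on Y}, the theorem reduces to identifying this intersection as the claimed closed region.

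Two of the required monotone convergences as $\epsilon \to 0^+$ are immediate from Section \ref{prelimsec}: the identity $a_\epsilon = \ord_C(\|D_\epsilon\|) \uparrow \ord_C(\|D\|) = a$ is exactly how $\ord_C(\|\cdot\|)$ is defined on pseudoeffective classes, and $\mu_\epsilon \downarrow \mu$ follows from closedness of $\overline{\Eff}(S)$ together with the trivial inequality $\mu_\epsilon \geq \mu$ (since $D - sC$ pseudoeffective implies $D_\epsilon - sC = (D - sC) + \epsilon A$ pseudoeffective). For the profile functions, since on a surface $N_{t,\epsilon} = \sum_E \ord_E(\|D_\epsilon - tC\|)\, E$ and the asymptotic orders $\ord_E(\|D_\epsilon - tC\|)$ increase monotonically to $\ord_E(\|D - tC\|)$ as $\epsilon \to 0^+$, the effective divisors satisfy $N_{t,\epsilon} \leq N_t$ coefficient-wise and $N_{t,\epsilon} \uparrow N_t$; hence $\alpha_\epsilon(t) = \ord_x(N_{t,\epsilon}|_C) \uparrow \alpha(t)$.

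For $\beta$, the identity $P_{t,\epsilon} - P_t = \epsilon A + (N_t - N_{t,\epsilon})$ yields
$$
\beta_\epsilon(t) - \beta(t) = \epsilon(C \cdot A) + C \cdot (N_t - N_{t,\epsilon}) - \ord_x\bigl((N_t - N_{t,\epsilon})|_C\bigr).
$$
For $t \in (a, \mu)$ we have $\ord_C(\|D - tC\|) = 0$, so $C$ is not in the support of either $N_t$ or $N_{t,\epsilon}$; every component of the effective difference $N_t - N_{t,\epsilon}$ therefore meets $C$ properly, and $C \cdot (N_t - N_{t,\epsilon})$ dominates the local intersection at the single point $x$. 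This forces $\beta_\epsilon(t) \geq \beta(t) + \epsilon(C \cdot A) \geq \beta(t)$, and $\beta_\epsilon(t) \downarrow \beta(t)$ by the monotone convergence $N_{t,\epsilon} \uparrow N_t$ combined with $\epsilon \to 0^+$. Assembling the four monotonicities, $\bigcap_{\epsilon > 0} \okbd_{Y_\bullet}(D_\epsilon)$ agrees with the interior region $\{(x_1, x_2) : a < x_1 < \mu,\; \alpha(x_1) \leq x_2 \leq \beta(x_1)\}$, and the theorem follows by taking closures. The main obstacle will be the boundary behavior at $t = a$ and $t = \mu$, where the support of $N_t$ can jump abruptly and the Zariski decomposition is merely upper semi-continuous; this is harmless because the interior description is dense in the claimed body, so closure of the interior matches closure of the intersection.
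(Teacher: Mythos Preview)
Your proof is correct and follows essentially the same approach as the paper: both reduce to \cite[Theorem 6.4]{lm-nobody} by perturbing $D$ to the big divisor $D+\epsilon A$ and passing to the limit, using that the Zariski decomposition of $D_t$ is the limit of the Zariski decompositions of $D_t+\epsilon A$. The paper's proof simply asserts this convergence and concludes, whereas you spell out in detail the monotone convergences $a_\epsilon \uparrow a$, $\mu_\epsilon \downarrow \mu$, $\alpha_\epsilon(t) \uparrow \alpha(t)$, $\beta_\epsilon(t) \downarrow \beta(t)$ and verify that $C$ does not appear in the support of $N_t$ or $N_{t,\epsilon}$ on the relevant interval; these elaborations are correct and make the argument more transparent.
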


\begin{proof}
Let $D^{\epsilon} :=D+\epsilon A$ for some ample divisor $A$ and a positive number $\eps \geq 0$, and $D_t^\epsilon:=D^{\epsilon}-tC$. Denote by $D_t^{\epsilon}=P_t^{\epsilon} + N_t^{\epsilon}$ the Zariski decomposition. By Lemma \ref{zdconti} below, we see that the Zariski decomposition $D_t=P_t+N_t$ is given by
$$
P_t=P_t^0=\lim_{\epsilon \to 0+} P_t^{\epsilon}~~\text{and}~~N_t=N_t^0=\lim_{\epsilon \to 0+} N_t^{\epsilon}.
$$
Since $\oklim_{Y_\bullet}(D)=\lim_{\epsilon \to 0+} \Delta_{Y_\bullet}(D+\epsilon A)$, the assertion now follows from \cite[Theorem 6.4]{lm-nobody}.
\end{proof}

\begin{remark}
In Theorem \ref{surfokbd}, we do not need to assume that an admissible flag $Y_\bullet$ contains a positive volume subvariety.
\end{remark}

In the proof of Theorem \ref{surfokbd}, we use the following continuity property of the Zariski decomposition on a surface.

\begin{lemma}\label{zdconti}
Let $S$ be a smooth projective surface, $D$ be a pseudoeffective divisor and $A$ an ample divisor on $S$. Consider the divisor $D^{\eps}:=D + \eps A$ for $\eps \geq 0$, and denote by $D^{\eps}=P^{\eps} + N^{\eps}$ the Zariski decomposition. Then $P^0 = \lim_{\eps \to 0+} P^{\eps}$ and $N^0=\lim_{\eps \to 0+}N^{\eps}$.
\end{lemma}

\begin{proof}
It is sufficient to show that $N^0=\lim_{\eps \to 0+}N^{\eps}$. We write $N^0=\sum_i a_i^0 D_i$ where $D_i$ are prime divisors and $a_i^0 > 0$. Since $\text{Supp}(N^{\eps})=\bm(D^{\eps}) \subseteq \bm(D^0) = \text{Supp}(N^0)$, we can also write $N^{\eps} = \sum_i a_i^{\eps} D_i$ with $a_i^{\eps} \geq 0$ for any $\eps \geq 0$. Note that $a_i^{\eps} = \ord_{D_i}(||D^{\eps}||)$ for any $\eps \geq 0$ (see \cite[Remark III.1.17 (1)]{nakayama}). By definition, we have
$$
\lim_{\eps \to 0+} \ord_{D_i}(||D^{\eps}||) = \lim_{\eps \to 0+} \ord_{D_i}(||D+\eps A||) = \ord_{D_i}(||D||)=a_i^0.
$$
Thus it follows that $N^0=\lim_{\eps \to 0+}N^{\eps}$.
\end{proof}

Examples \ref{nefex} and \ref{toricex} provide  examples in higher dimensions. It is in general very difficult to compute the Okounkov body in higher dimensions.

\begin{example}\label{nefex}
Let $D$ be a nef divisor on a smooth projective variety $X$, and fix an admissible flag $Y_\bullet$ containing a positive volume subvariety $V=Y_{n-\kappanu(D)}$ of $D$. Then we have
$$
\oklim_{Y_\bullet}(D)=\okbd_{Y_{n-\kappanu(D)}}(D|_V)~~\text{ and } ~~\vol_{\R^{\kappanu(D)}}(\oklim_{Y_\bullet}(D)) = \frac{1}{\kappanu(D)!}(D|_V)^{\kappanu(D)}.
$$
\end{example}

\begin{example}\label{toricex}
Let $X$ be a smooth projective toric variety, and $D$ be a $T$-invariant divisor on $X$.
Fix an admissible flag $Y_\bullet$ consisting of $T$-invariant subvarieties.
In toric geometry, one can associate a rational polytope $P_D$ to $D$ (see \cite{F}).
In \cite[Proposition 6.1]{lm-nobody}, it is shown that $P_D$ is nothing but the Okounkov body $\okbd_{Y_\bullet}(D)$ up to translation when $D$ is big.
When $D$ is pseudoeffective, one can find an admissible flag $Y_\bullet$ consisting of $T$-invariant subvarieties such that
$$
P_D=\okval_{Y_\bullet}(D)=\oklim_{Y_\bullet}(D)
$$
up to translation.
\end{example}

\end{document}